\documentclass[12pt]{amsart}
\usepackage{amsmath}
\usepackage{amssymb}
\usepackage{amscd}
\usepackage{amsthm}
\usepackage{enumerate}
\usepackage{fullpage}
\usepackage{mathrsfs}
\usepackage{setspace}
\usepackage{mathtools}
\usepackage{xcolor}
\usepackage[all]{xy}
\usepackage{url}
\usepackage[colorlinks=true,linkcolor=blue,breaklinks,pagebackref]{hyperref}
\usepackage[lite,abbrev,alphabetic]{amsrefs}
\usepackage[poorman]{cleveref}
\usepackage{bbm}
\usepackage{tikz-cd}
\usepackage{graphicx}

\theoremstyle{definition}
\newtheorem{definition}{Definition}[section]

\theoremstyle{plain}
\newtheorem{theorem}[definition]{Theorem}
\newtheorem{proposition}[definition]{Proposition}
\newtheorem{lemma}[definition]{Lemma}

\newtheorem*{theorem*}{Theorem}

\theoremstyle{remark}
\newtheorem{remark}[definition]{Remark}

\crefname{theorem}{Theorem}{Theorems}
\crefname{proposition}{Proposition}{Propositions}
\crefname{lemma}{Lemma}{Lemmas}
\crefname{corollary}{Corollary}{Corollaries}
\crefname{conjecture}{Conjecture}{Conjectures}
\crefname{hypothesis}{Hypothesis}{Hypotheses}
\crefname{remark}{Remark}{Remarks}
\crefname{condition}{Condition}{Conditions}
\crefname{example}{Example}{Examples}

\def\N{\mathbb{N}}
\def\C{\mathbb{C}}
\def\R{\mathbb{R}}
\def\Q{\mathbb{Q}}
\def\Z{\mathbb{Z}}
\def\A{\mathbb{A}}

\def\GL{\mathrm{GL}}

\def\rd{\,\mathrm{d}}

\def\inf{\infty}

\def\bs{\backslash}

\DeclareMathOperator{\vol}{vol}

\DeclareMathOperator{\Ind}{Ind}

\DeclareMathOperator{\Ad}{Ad}

\DeclareMathOperator{\tr}{tr}

\newcommand{\fa}{\mathfrak{a}}

\newcommand{\fg}{\mathfrak{g}}

\newcommand{\fz}{\mathfrak{z}}

\newcommand{\fB}{\mathfrak{B}}

\newcommand{\cA}{\mathcal{A}}

\newcommand{\cC}{\mathcal{C}}

\newcommand{\cF}{\mathcal{F}}

\newcommand{\cL}{\mathcal{L}}
\newcommand{\cM}{\mathcal{M}}

\newcommand{\cP}{\mathcal{P}}

\newcommand{\cU}{\mathcal{U}}

\newcommand{\cX}{\mathcal{X}}

\newcommand{\reg}{\mathrm{reg}}

\newcommand{\sS}{\mathscr{S}}

\numberwithin{equation}{section}

\newcommand{\disc}{\mathrm{disc}}

\newcommand{\tG}{\widetilde{G}}
\newcommand{\tM}{\widetilde{M}}
\newcommand{\tL}{\widetilde{L}}
\newcommand{\tP}{\widetilde{P}}
\newcommand{\tQ}{\widetilde{Q}}
\newcommand{\tW}{\widetilde{W}}

\newcommand{\op}{\mathrm{op}}

\newcommand{\talpha}{\widetilde{\alpha}}
\newcommand{\tbeta}{\widetilde{\beta}}
\newcommand{\tlambda}{\widetilde{\lambda}}
\newcommand{\tLambda}{\widetilde{\Lambda}}
\newcommand{\tpi}{\widetilde{\pi}}

\newcommand{\tw}{\widetilde{w}}

\begin{document}

\title{On the absolute convergence of the spectral side of the twisted trace formula}

\author{Werner M\"uller}
\address{Universit\"at Bonn, Mathematisches Institut, Endenicher Allee 60, D-53115 Bonn, Germany}
\email{mueller@math.uni-bonn.de}

\author{Satoshi Wakatsuki}
\address{Faculty of Mathematics and Physics, Institute of Science and Engineering, Kanazawa University, Kakumamachi, Kanazawa, Ishikawa, 920-1192, Japan}
\email{wakatsuk@staff.kanazawa-u.ac.jp}

\date{\today}

\begin{abstract}
We establish the absolute convergence of the spectral side of the twisted trace formula for reductive algebraic groups. 
More precisely, we extend the absolute convergence theorem of the spectral side due to Finis--Lapid--M\"uller \cite{FLM11} to the twisted setting. 
This paper provides a detailed proof of the absolute convergence theorem, whose proof is known to experts but does not seem to have appeared in the literature.
The resulting formulas are motivated by applications to limit multiplicity problems and to Weyl laws for self-dual automorphic representations of $\GL_n$.
\end{abstract}

\maketitle

\tableofcontents

\section{Introduction}\label{sec:intro}

Arthur's trace formula is a fundamental tool in the theory of automorphic forms, and the absolute convergence theorem for its spectral side forms one of the foundations of the theory.
For the (untwisted) trace formula, Finis--Lapid--M\"uller \cite{FLM11} proved an absolute convergence theorem and established a refinement of the spectral side. 
The purpose of this paper is to extend their absolute convergence theorem to the twisted trace formula.
In the twisted setting, although no explanation is given, Labesse--Waldspurger remarked in \cite{LW13}*{Proof of Th\'eor\`eme 14.3.1} that the arguments used in the proof of the absolute convergence theorem of Finis--Lapid--M\"uller carry over to the twisted trace formula without modification.
Moreover, it is through this absolute convergence that they obtain a refinement of the spectral side, see \cite{LW13}*{\S14.3}.
Subsequently, Parab carried out their assertion concerning the absolute convergence theorem in \cite{Par19}*{Proof of Theorem 7.2}.
However, that proof contains an error (see Remark \ref{rem:Parab} for details), and the form of the trace norm described in the proof does not yield a correct proof of the absolute convergence. 
Consequently, a proof of the absolute convergence theorem for the spectral side of the twisted trace formula appears to be unavailable in the existing literature.
In particular, the combinatorial formula (Theorem \ref{thm:main}) that is essential for applications does not appear in either \cite{LW13} or \cite{Par19}.

The absolute convergence argument of \cite{FLM11} makes it possible to enlarge the class of test functions admissible in the trace formula beyond those with compact support.
In particular, such test functions are required for Langlands' idea of Beyond Endoscopy in connection with functoriality. 
The same is true in the twisted setting. 
Indeed, Parab presented an application to a study of residues of Rankin--Selberg $L$-functions in \cite{Par19}*{\S8}. 
Furthermore, we expect that the detailed proof of the absolute convergence theorem established in this paper will facilitate future applications of the twisted trace formula, especially in situations where precise spectral estimates are required.
Indeed, the combinatorial formula \cite{FLM11}*{Theorem 2}, together with the formula obtained by combining it with the refinement of the spectral side, namely the second formula in \cite{FLM11}*{Corollary 1}, plays an important role in estimates of the spectral side, as illustrated by its applications to limit multiplicity formulas and Weyl laws in, for example, \cites{FLM15,FM21}. 
The combinatorial formula of the present paper (Theorem \ref{thm:main}), together with the corresponding formula obtained by combining it with the refinement of the spectral side (Theorem \ref{thm:conti}), is expected to play a similar role in the study of asymptotic problems involving the twisted trace formula.
For example, in \cite{TW26}, a conditional limit multiplicity formula for self-dual cuspidal representations of $\GL_n$ was established. 
Removing the condition imposed there requires a detailed analysis of the spectral side of the twisted trace formula, for which our combinatorial formula provides the necessary foundation.
Furthermore, it will be applied to the study of Weyl laws for self-dual cuspidal representations of $\GL_n$, which the authors are currently investigating.

The paper is organized as follows.
In \S\ref{sec:setup}, we review basic material, including the structure of disconnected reductive algebraic groups and their root systems.
In \S\ref{sec:Adja}, we discuss the adjacency relation for parabolic subsets, and in \S\ref{sec:tGtM}, we introduce $(\tG,\tM)$-families.
In \S\ref{sec:inducedfan}, we clarify the relationship between $(\tG,\tM)$-families and induced fans, which is needed for the proof of the combinatorial formula in the twisted setting.
In \S\ref{sec:measure}, we review the normalization of measures, and in \S\ref{sec:intertwining}, we recall the theory of intertwining operators, thereby preparing for the statement of the combinatorial formula.
We then present the combinatorial formula in the twisted setting in \S\ref{sec:combiatorial}.
Sections \S\ref{sec:rho} and \S\ref{sec:Frechet} provide the necessary preparation for the statement of the absolute convergence theorem, and in \S\ref{sec:absoconv} we prove the absolute convergence theorem in the twisted setting using the combinatorial formula.
After reviewing the modified kernel for the twisted trace formula in \S\ref{sec:kernel}, we conclude in \S\ref{sec:refinement} by describing the refinement of the spectral side combined with the combinatorial formula.

\medskip
{\em Acknowledgment.}
The second author is partially supported by JSPS Grant-in-Aid for Scientific Research (B) No.23K20785. 

\section{Setup}\label{sec:setup}

In this section, we introduce the notation following the styles of Arthur \cites{Art88b,Art05}, Finis--Lapid--M\"uller \cite{FLM11}, and Labesse--Waldspurger \cite{LW13}.

We first fix the notation concerning number fields and adeles.
\begin{itemize}
    \item Let $F$ be an algebraic number field.
    \item $\A$ is the adele ring of $F$, $\A_f$ is the finite adele ring of $F$, and $F_\inf \coloneqq \R\otimes_\Q F$. 
    \item For each place $v$ of $F$, we denote by $F_v$ the completion of $F$ at $v$, and $|\;|_v$ the normalized valuation on $F_v$. 
    We define the idele norm by $|\;|\coloneqq\prod_v |\;|_v$. 
\end{itemize}

We describe the setting of the algebraic groups.
For details on algebraic groups, we refer the reader to \cites{Bor91,Spr98}.
\begin{itemize}
    \item Let $G'$ be a reductive algebraic group over $F$. 
    \item We denote by $G$ the connected component of $1$ in $G'$. It is known that $G$ is a normal closed subgroup of $G'$, of finite index, and defined over $F$. 
    \item Take a connected component $\tG$ of $G'$. Throughout this paper, we assume that the set of $F$-rational points $\tG(F)$ is nonempty.
    \item Fix a minimal parabolic subgroup $P_0$ of $G$ over $F$. 
    \item Fix a Levi subgroup $M_0$ of $P_0$ over $F$. 
    \item Define a homomorphism $\Ad$ from $G'$ to the automorphism group of $G$ by $\Ad(g)h \coloneqq ghg^{-1}$ for $g \in G'$ and $h \in G$.
    \item  There exists an element $\delta\in \tG(F)$ such that $\Ad(\delta)(P_0)=P_0$ and $\Ad(\delta)(M_0)=M_0$.
    Note that such a $\delta \in \tG(F)$ is unique up to multiplication by elements of $M_0$. 
    In the following, we fix one such $\delta \in \tG(F)$ and set $\theta \coloneqq \Ad(\delta)$.
    Then $\theta$ is an automorphism of $G$ defined over $F$, and $P_0$ and $M_0$ are $\theta$-stable, 
    that is, $\theta(P_0)=P_0$ and $\theta(M_0)=M_0$.
\end{itemize}
\begin{remark}
    We discuss the twisted trace formula in the framework of reductive algebraic groups, including the non-connected case. 
    This is the same setting as in Arthur's work.
    At present, however, the twisted trace formula is formulated in the more general setting of twisted spaces; see \cite{LW13}*{Ch.~2}. 
    For our purposes, the framework of reductive algebraic groups is sufficient, and in order to simplify the exposition and make its relation to Arthur's formulation more transparent, we work in this setting throughout the paper.
\end{remark}

Let us introduce the notation for Levi subgroups.
\begin{itemize}
    \item Let $T_0$ denote the maximal $F$-split torus in $M_0$, that is, $M_0$ is the centralizer of $T_0$ in $G$. 
    \item $\cL^G(M_0)$ is the set of Levi $F$-subgroups containing $M_0$, that is, the finite set of centralizers of $F$-subtori of $T_0$. 
\item For $M\in\cL^G(M_0)$, we use the following notation:
\begin{itemize}        
    \item $T_M$ is the $F$-split part of the identity component of the center of $M$. 
    \item $\cL^G(M)$ is the set of Levi $F$-subgroups containing $M$. 
    \item $\cP^G(M)$ is the set of parabolic $F$-subgroups with Levi part $M$. 
    \item $\cF^G(M)\coloneqq\coprod_{L\in\cL(M)} \cP(L)$ is the finite set of parabolic $F$-subgroups of $G$ containing $M$. 
    \item For each $P\in\cP^G(M)$, let $N_P$ denote the unipotent radical of $P$, and let $M_P$ denote the Levi subgroup of $P$ such that $M_P\in \cL^G(M_0)$. 
\end{itemize}
\end{itemize}
Next, we explain the parabolic subsets of $\tG$ and related notions.
\begin{itemize}
    \item Take a parabolic subgroup $P\in\cF^G(M_0)$. Let $\tP$ denote the normalizer of $P$ in $\tG$. If $\tP$ is nonempty, we call $\tP$ a parabolic subset of $\tG$. In this case, note that $\tP(F)$ is also nonempty.
    \item Set $\tM_0\coloneqq M_0\delta$ and $\cF^{\tG}(\tM_0)\coloneqq\{ \tP \mid P\in\cF^G(M_0)$ and $\tP\supset\tM_0 \}$. Note that $\tP\supset\tM_0$ implies $\tP\neq \emptyset$. 
    \item Any parabolic subset $\tP\in\cF^{\tG}(\tM_0)$ satisfies $\tP=P\delta$ and $\theta(P)=P$. 
    On the other hand, if $P \in \cF^G(M_0)$ is $\theta$-stable, then we have $\tP \in \cF^{\tG}(\tM_0)$.
    \item Take a parabolic subset $\tP\in \cF^{\tG}(\tM_0)$, that is, $P\in\cF^G(M_0)$ and $\theta(P)=P$. 
    Then, we have $\theta(M_P)=M_P$ and $\theta(N_P)=N_P$. Hence, we have a Levi decomposition $\tP = \tM_P N_P$, where $\tM_P\coloneqq M_P\, \delta$. 
    We call this $\tM_P$ a Levi subset of $\tG$.
\end{itemize}
For a Levi subset $\tM$ of $\tG$, we use the following notation:
\begin{itemize}
    \item $T_{\tM}$ is the $F$-split part of the identity component of the centralizer of $\tM$ in $M$. 
    Then, we have $T_{\tM}=\{ t\in T_M \mid \theta(t)=t \}$.
    \item $\cF^{\tG}(\tM)\coloneqq \{ \tP\in\cF^{\tG}(M_0) \mid \tM_P\supset \tM\}$. 
    \item $\cL^{\tG}(\tM)\coloneqq \{ \tM_P \mid \tP\in \cF^{\tG}(\tM)\}$. 
    \item $\cP^{\tG}(\tM) \coloneqq \{ \tP\in \cF^{\tG}(\tM) \mid \tM_P=\tM \}$.
\end{itemize}
For a Levi subgroup $M\in\cL^G(M_0)$, we introduce the notation related to roots.
\begin{itemize}
    \item Denote by $X^*(M)$ (resp. $X^*(T_M)$) the lattice  of $F$-rational characters of $M$ (resp. $T_M$). 
    Note that, by restriction, $X^*(M)$ is a sublattice of $X^*(T_M)$. 
    \item $\fa_M^*$ is the $\R$-vector space spanned by $X^*(M)$. Set $\fa^*_{M,\C}\coloneqq\fa_M^*\otimes_\R \C$. 
    \item $\fa_M$ is the dual space of $\fa_M^*$, which is spanned by the co-characters of $T_M$. 
    \item $H_M\colon M(\A)\to\fa_M$ is the homomorphism given by $\langle \chi,H_M(m) \rangle=\log |\chi(m)|$, $\chi\in X^*(M)$, $m\in M(\A)$. 
    \item $\Sigma_M$ is the set of reduced roots of $T_M$ on the Lie algebra of $G$. 
    \item For any $\alpha\in \Sigma_M\, (\subset\fa_M^*)$, we write $\alpha^\vee\in\fa_M$ for the corresponding coroot of $\alpha$. 
    \item Take a Levi subgroup $L\in\cL^G(M)$.
    We identify $\fa_L^*$ (resp. $\fa_L$) with a subspace of $\fa_M^*$ (resp. $\fa_M$) by the restriction homomorphism $X^*(L)\to X^*(M)$ (resp. $X^*(T_M)\to X^*(T_L)$). 
    Denote by $\fa_M^L$ (resp. $(\fa_M^L)^*$) the annihilator of $\fa_L^*$ in $\fa_M$ (resp. $\fa_L$ in $\fa_M^*$).
    Then, $\fa_M=\fa_L\oplus\fa_M^L$ and $\fa_M^*=\fa_L^*\oplus(\fa_M^L)^*$. 
\end{itemize}
For a parabolic subgroup $P\in\cP^G(M)$, we introduce the notation related to positive roots.
\begin{itemize}
    \item $\fa_P\coloneqq\fa_M$. 
    \item $\Sigma_P$ is the set of reduced roots of $T_M$ on the Lie algebra of $N_P$.
    \item $\Delta_P$ is the set of simple roots of $P$, which is a basis of $(\fa_P^G)^*$. 
    \item For $Q\in\cF^G(M)$, $Q\supset P$, we set $\Delta_P^Q\coloneqq \{ \alpha\in \Delta_P \mid \alpha|_{\fa_Q}=0   \}$ and $(\Delta_P^Q)^\vee\coloneqq\{ \alpha^\vee \mid \alpha\in\Delta_P^Q\}$. 
    The following two properties are known.
    \begin{itemize}
        \item The map $Q \longmapsto \Delta_{P}^{Q}$ gives a bijection from $\{ Q\in \cF^G(M) \mid Q \supset P \,\}$ onto the set of subsets of $\Delta_{P}$.
        \item $\Delta_{P}^{Q}$ (resp. $(\Delta_{P}^{Q})^\vee$) forms a basis of $(\fa_{P}^{Q})^*$ (resp. $\fa_{P}^{Q}$).
    \end{itemize}
    \item $\Sigma_P^\vee\coloneqq\{ \alpha^\vee \mid \alpha\in\Sigma_P\}$ and $\Delta_P^\vee\coloneqq\{ \alpha^\vee \mid \alpha\in\Delta_P\}$. 
    \item $P^\op$ is the opposite of $P$, that is, $\Sigma_{P^\op}=-\Sigma_P$ and $\Delta_{P^\op}=-\Delta_P$. Then, $\Sigma_M=\Sigma_P\sqcup\Sigma_{P^\op}$.
\end{itemize}
For $\tM\in\cF^{\tG}(\tM_0)$ and $\tP\in\cP^{\tG}(\tM)$, 
we introduce the notation related to roots.
Note that $M$, $T_M$, and $N_P$ are $\theta$-stable. 
\begin{itemize}
    \item $\theta$ acts on $X^*(T_M)$ as $\theta(\chi)(m)\coloneqq \chi(\theta^{-1}(m))$, $\chi\in X^*(T_M)$, $m\in T_M$, and $\theta$ also acts on $X_*(T_M)$ as $(\theta(\eta))(a)\coloneqq \theta(\eta(a))$, $\eta\in X_*(T_M)$, $a\in \mathbb{G}_m$. 
    Then, we define
    \[
    \fa^*_{\tM}=\fa_{\tP}^*\coloneqq \{ \lambda\in \fa^*_M \mid \theta(\lambda)=\lambda \}, \qquad 
    \fa_{\tM}=\fa_{\tP}\coloneqq \{ H\in \fa_M \mid \theta(H)=H \}.
    \]
    \item For $\tL\in \cL^{\tG}(\tM)$ and $\tQ\in\cP^{\tG}(\tL)$, $Q\supset P$, we obtain
    \[
    \fa_{\tM}^*=\fa_{\tL}^*\oplus (\fa^{\tL}_{\tM})^* \quad (\text{equivalently }\fa_{\tP}^*=\fa_{\tQ}^*\oplus (\fa^{\tQ}_{\tP})^*),
    \]
    \[
    \fa_{\tM}=\fa_{\tL}\oplus \fa^{\tL}_{\tM} \quad (\text{equivalently }\fa_{\tP}=\fa_{\tQ}\oplus \fa^{\tQ}_{\tP}),
    \]
    where $(\fa^{\tL}_{\tM})^*=(\fa^{\tQ}_{\tP})^*\coloneqq\{\lambda\in (\fa^{L}_{M})^* \mid \theta(\lambda)=\lambda\}$ and $\fa^{\tL}_{\tM}=\fa^{\tQ}_{\tP}\coloneqq\{H\in \fa^{L}_{M} \mid \theta(H)=H\}$.
    \item $\theta$ acts bijectively on $\Sigma_P$ and on $\Delta_P$, since $N_P$ is $\theta$-stable.
    Choose a positive integer $\ell$ such that $\theta^\ell$ acts trivially on $\Delta_P$.
    \item For $\alpha\in\Sigma_M$, we set
    \[
    \talpha\coloneqq\frac{1}{\ell}\sum_{j=0}^{\ell-1}\theta^j(\alpha), \quad \talpha^\vee\coloneqq\frac{1}{\ell}\sum_{j=0}^{\ell-1}\theta^j(\alpha^\vee).
    \]
    In addition, 
    \[
    \Sigma_{\tM}\coloneqq\{\talpha\mid \alpha\in\Sigma_M\}, \quad \Sigma_{\tP}\coloneqq\{\talpha\mid \alpha\in\Sigma_P\}, \quad \Delta_{\tP}\coloneqq\{\talpha\mid \alpha\in\Delta_P\} \quad (\subset \fa_{\tM}^*),
    \]
    \[
    \Sigma_{\tM}^\vee\coloneqq\{\talpha^\vee\mid \alpha\in\Sigma_M\}, \quad \Sigma_{\tP}^\vee\coloneqq\{\talpha^\vee\mid \alpha\in\Sigma_P\}, \quad \Delta_{\tP}^\vee\coloneqq\{\talpha^\vee\mid \alpha\in\Delta_P\} \quad (\subset \fa_{\tM}).
    \]
    \item $\Sigma_{\tM}$ becomes the set of reduced roots of $T_{\tM}$ on the Lie algebra of $G$.
    \item $P^\op$ also belongs to $\cP^{\tG}(\tM)$, hence $\Sigma_{\tM}=\Sigma_{\tP}\sqcup\Sigma_{\tP^\op}$. 
    \item For $\tQ\in\cP^{\tG}(\tL)$, $Q\supset P$, we set
    \[
    \Delta_{\tP}^{\tQ}\coloneqq \{\talpha\in\Delta_{\tP} \mid \alpha|_{\fa_{\tQ}}=0 \}.
    \]
\end{itemize}

We record the following two basic facts, cf. \cite{LW13}*{Lemmes 2.7.1, 2.7.2}.
\begin{itemize}
    \item Fix $P \in \cF^{\tG}(\tM_0)$.
    The map $\tQ \longmapsto \Delta_{\tP}^{\tQ}$ gives a bijection from the set $\{\, \tQ \in \cF^{\tG}(\tM_0) \mid \tQ \supset \tP \,\}$ onto the set of subsets of $\Delta_{\tP}$.
    \item Let $\tP, \tQ \in \cF^{\tG}(\tM_0)$ and assume $\tP \subset \tQ$. Then $\Delta_{\tP}^{\tQ}$ (resp. $(\Delta_{\tP}^{\tQ})^\vee$) forms a basis of $(\fa_{\tP}^{\tQ})^*$ (resp. $\fa_{\tP}^{\tQ}$).
\end{itemize}

\section{Adjacency}\label{sec:Adja}

We first fix the notation related to adjacency.
\begin{itemize}
    \item For $j\in\Z_{\ge 0}$, set
    \[
    \cL^G_j(M)\coloneqq\{ L\in\cL^G(M) \mid \dim\fa_M^L=j \}, \quad \cF^G_j(M)\coloneqq \bigsqcup_{L\in\cL_j^G(M)} \cP^G(L),
    \]
    \[
    \cL^{\tG}_j(\tM)\coloneqq\{ L\in\cL^{\tG}(\tM) \mid \dim\fa_{\tM}^{\tL}=j \}, \quad \cF^{\tG}_j(\tM)\coloneqq \bigsqcup_{L\in\cL_j^{\tG}(\tM)} \cP^{\tG}(\tL).
    \]
    \item For $P$, $Q\in\cP^G(M)$, we write $\overline{PQ}$ for the closure of $PQ$. 
    \item For $\alpha\in\Sigma_M$, we set $H_\alpha\coloneqq\{ \lambda\in\fa_M^* \mid \langle \lambda, \alpha^\vee\rangle=0 \}$.
    \item For $\talpha\in\Sigma_{\tM}$, we set $H_{\talpha}\coloneqq\{ \tlambda\in\fa_{\tM}^* \mid \langle \tlambda, \talpha^\vee\rangle=0 \}$.
    \item $\fa^*_{P,+}$ (resp. $\fa^*_{\tP,+}$) is the closure of the Weyl chamber of $P$ (resp. $\tP$), that is,
    \[
    \fa^*_{P,+} = \{ \lambda\in\fa_M^* \mid \langle \lambda,\alpha^\vee\rangle\ge0 \text{ for all } \alpha\in\Delta_P\}.
    \]
    \[
    \text{(resp. } \fa^*_{\tP,+} = \{ \tlambda\in\fa_{\tM}^* \mid \langle \tlambda,\talpha^\vee\rangle\ge0 \text{ for all } \talpha\in\Delta_{\tP}\}.)
    \]
    \item $\Sigma(G,M)$ (resp. $\Sigma(\tG,\tM)$) is the polyhedral fan whose chambers are the closures of the connected components of $\fa_M^*\setminus \cup_{\alpha\in\Sigma_M}H_\alpha$ (resp. $\fa_{\tM}^*\setminus \cup_{\alpha\in\Sigma_{\tM}}H_{\talpha}$). 
    For polyhedral fans arising from hyperplane arrangements, see, for example, \cite{Zie95}*{Ch.7}.
    \item Let $\Sigma_j(G,M)$ denote the subset of $\Sigma(G,M)$ consisting of cones of codimension $j\in\Z_{\ge 0}$.
\end{itemize}

In the following, we briefly recall the notion of adjacency for parabolic subgroups and its basic properties.
Let $M \in \cL^G(M_0)$, $\alpha\in\Sigma_M$, and $P$, $Q\in\cP^G(M)$.
\begin{itemize}
    \item If $\Sigma_P\cap\Sigma_{Q^\op}=\{\alpha\}$, we say that $P$ and $Q$ are adjacent along $\alpha$, and we write $P\mid^\alpha Q$. 
    \item If $H_\alpha$ is spanned by the wall $\fa^*_{P,+}\cap \fa^*_{Q,+}$, we say that $\fa^*_{P,+}$ and $\fa^*_{Q,+}$ are adjacent along $H_\alpha$. 
    \item The following three conditions are equivalent. 
    \begin{itemize}
        \item $P\mid^\alpha Q$.
        \item $\fa^*_{P,+}$ and $\fa^*_{Q,+}$ are adjacent along $H_\alpha$. 
        \item $\Delta_{P}^{\overline{PQ}}=\{\alpha\}$. (This implies $\overline{PQ}\in\cF^G_1(M)$.) 
    \end{itemize}
    \item The mapping $\cF^G(M)\ni P\mapsto \fa^*_{P,+}\in\Sigma(G,M)$ is bijective. 
Under this bijection, $\cF_j^G(M)$ corresponds to $\Sigma_j(G,M)$. 
    \item For two given parabolic subgroups $Q_1$, $Q_2\in\cP^G(M)$, there exist $R_0$, $R_1,\dots,R_k\in\cP^G(M)$ and $\alpha_1,\dots,\alpha_k\in\Sigma_M$ such that
\[
Q=R_0\mid^{\alpha_1}R_1\mid^{\alpha_2}R_2 \cdots R_{k-1}\mid^{\alpha_k} R_k=Q'.
\]
\end{itemize}

Let $\tM \in \cL^{\tG}(\tM_0)$, $\talpha\in\Sigma_{\tM}$, and $\tP$, $\tQ\in\cP^{\tG}(\tM)$. 
If $\Sigma_{\tP}\cap\Sigma_{\tQ^\op}=\{\talpha\}$, we say that $\tP$ and $\tQ$ are adjacent along $\alpha$, and we write $\tP\mid^{\talpha} \tQ$. 
If $H_{\talpha}$ is spanned by the wall $\fa^*_{\tP,+}\cap \fa^*_{\tQ,+}$, we say that $\fa^*_{\tP,+}$ and $\fa^*_{\tQ,+}$ are adjacent along $H_{\talpha}$. 
\begin{lemma}\label{lem:troot}
    The following three conditions are equivalent.
    \begin{itemize}
        \item[{\rm(i)}] $\tP \mid^{\talpha} \tQ$.
        \item[{\rm(ii)}] $\fa^*_{\tP,+}$ and $\fa^*_{\tQ,+}$ are adjacent along $H_{\talpha}$.
        \item[{\rm(iii)}] $\Delta_{\tP}^{\widetilde{\overline{PQ}}}=\{\talpha\}$. (This implies $\widetilde{\overline{PQ}}\in\cF^{\tG}_1(\tM)$.)
    \end{itemize}
\end{lemma}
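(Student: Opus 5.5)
The strategy is to transport the untwisted equivalences to the twisted setting by averaging over $\theta$, exploiting the fact that $\fa_{\tM}^*$ is the $\theta$-fixed subspace of $\fa_M^*$. The key dictionary is the following. For $\tP\in\cP^{\tG}(\tM)$, the chamber $\fa^*_{\tP,+}$ equals $\fa^*_{P,+}\cap\fa_{\tM}^*$. Moreover, for $\tlambda\in\fa_{\tM}^*$ and $\alpha\in\Sigma_M$ we have $\langle\tlambda,\talpha^\vee\rangle=\langle\tlambda,\alpha^\vee\rangle$, because $\theta$ preserves the pairing and fixes $\tlambda$. Consequently $H_{\talpha}=H_\alpha\cap\fa_{\tM}^*$, and the twisted roots are exactly the $\theta$-orbits of roots: $\talpha=\tbeta$ if and only if $\beta$ lies in the $\theta$-orbit of $\alpha$, up to the usual care about reducedness. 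In particular $\Sigma_{\tP}\cap\Sigma_{\tQ^\op}$ is the image of $\Sigma_P\cap\Sigma_{Q^\op}$ under $\alpha\mapsto\talpha$, and the latter set is $\theta$-stable since $P$ and $Q$ are $\theta$-stable. I will also use the two facts recorded from \cite{LW13}*{Lemmes 2.7.1, 2.7.2}: parabolic subsets containing $\tP$ correspond to subsets of $\Delta_{\tP}$, and $\Delta_{\tP}^{\tQ}$ is a basis of $(\fa_{\tP}^{\tQ})^*$.

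For (i)$\Leftrightarrow$(iii), set $R=\overline{PQ}$. It is $\theta$-stable because $P$ and $Q$ are, so $\tR\in\cF^{\tG}(\tM)$. In the untwisted setting, $\Delta_P^R$ is the set of simple roots of $P$ lying in the span of $\Sigma_P\cap\Sigma_{Q^\op}$; indeed $R$ is the smallest parabolic containing $P$ and $Q$, and its Levi has roots equal to the closure of $\Sigma_P\cap\Sigma_{Q^\op}$. Averaging over $\theta$, $\Delta_{\tP}^{\tR}$ is the set of $\theta$-orbits in $\Delta_P^R$.

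Assume (i). Then $\Sigma_P\cap\Sigma_{Q^\op}$ is a single $\theta$-orbit of roots whose averages all equal $\talpha$, so every root in the Levi of $R$ restricts to a multiple of $\talpha$ on $\fa_{\tM}$. Hence $\dim\fa_{\tM}^{\tR}=1$, and $\Delta_{\tP}^{\tR}=\{\talpha\}$.

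Conversely, assume (iii). Then $\fa_{\tM}^{\tR}$ is one-dimensional. Every $\beta\in\Sigma_P\cap\Sigma_{Q^\op}$ is a root of $M_R$ that is positive for $P$, so $\tbeta$ is a positive multiple of $\talpha$ in $(\fa_{\tM}^{\tR})^*$. Reducedness then forces $\tbeta=\talpha$, and so $\Sigma_{\tP}\cap\Sigma_{\tQ^\op}=\{\talpha\}$. I need $\Sigma_{\tP}\cap\Sigma_{\tQ^\op}$ to be nonempty here, which follows from $P\ne Q$. The reducedness of $\Sigma_{\tM}$, where $2\talpha$ might otherwise occur, is the delicate point. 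It is handled by the paper's convention that $\Sigma_{\tM}$ is the set of reduced roots of $T_{\tM}$, so that proportional positive elements are identified.

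For (ii)$\Leftrightarrow$(iii), I will use that $\fa^*_{\tP,+}\cap\fa^*_{\tQ,+}$ is a face of $\fa^*_{\tP,+}$. The twisted analogue of the bijection between $\cF(M)$ and the fan, which follows from the two facts from \cite{LW13} just recalled, identifies this face with $\fa^*_{\tR,+}$. Its span is $\fa_{\tR}^*$, which is the annihilator in $\fa_{\tM}^*$ of $\fa_{\tM}^{\tR}$. So it is a hyperplane $H_{\talpha}$ exactly when $\fa_{\tM}^{\tR}$ is spanned by $\talpha^\vee$, that is, when $\Delta_{\tP}^{\tR}=\{\talpha\}$.

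The main obstacle I expect is justifying that the common face of the two twisted chambers is precisely $\fa^*_{\tR,+}$. This amounts to verifying that $\Sigma(\tG,\tM)$ is the restriction of $\Sigma(G,M)$ to $\fa_{\tM}^*$, with cones corresponding to the $\theta$-stable parabolics. I would prove this by showing that the open chamber of $P$ meets $\fa_{\tM}^*$: average a regular element $\lambda$ over $\theta$, noting that $\theta$ permutes $\Delta_P$ and so preserves positivity. With that in hand, the intersection of faces computes as in the untwisted case.
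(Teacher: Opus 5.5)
Your argument is correct in substance but takes a different route from the paper. The paper works intrinsically with $T_{\tM}$. It introduces the twisted root spaces $\fg_{\tbeta}$ and proves (i)$\Leftrightarrow$(iii) by showing that $\talpha$ is the only element of $\Sigma_{\tP}$ vanishing on $\fa_{\widetilde{\overline{PQ}}}$, i.e.\ that $\mathrm{Lie}(N_{\overline{PQ}})$ is the sum of the $\fg_{\tbeta}$ over $\Sigma_{\tP}\setminus\{\talpha\}$ and also over $\Sigma_{\tQ}\setminus\{-\talpha\}$. Its second equivalence is (i)$\Leftrightarrow$(ii), obtained by directly comparing the root-set and chamber formulations. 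You instead push everything down to untwisted data through the averaging map. You identify $\Sigma_{\tP}\cap\Sigma_{\tQ^\op}$ with the image of $\Sigma_P\cap\Sigma_{Q^\op}$; this needs $\Sigma_{\tQ}\cap\Sigma_{\tQ^\op}=\emptyset$, hence your $\theta$-averaged regular vectors. You use that the roots of $M_{\overline{PQ}}$ lie in the span of $\Sigma_P\cap\Sigma_{Q^\op}$, and that $(\fa_{\tM}^{\widetilde{\overline{PQ}}})^*$ is the averaged image of $(\fa_M^{\overline{PQ}})^*$. Your second equivalence is (ii)$\Leftrightarrow$(iii), via $\fa^*_{\tP,+}\cap\fa^*_{\tQ,+}=\fa^*_{P,+}\cap\fa^*_{Q,+}\cap\fa^*_{\tM}=\fa^*_{\overline{PQ},+}\cap\fa^*_{\tM}$, which is essentially what the paper only establishes later in Lemma~\ref{lem:indfun2}.

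The step you single out as the main obstacle is easier than you suggest. The identity $\fa^*_{\tP,+}=\fa^*_{P,+}\cap\fa^*_{\tM}$ is immediate from $\langle\tlambda,\talpha^\vee\rangle=\langle\tlambda,\alpha^\vee\rangle$, so intersections commute with restriction for free. Averaging a regular vector is only needed to see that the resulting cone spans $\fa^*_{\widetilde{\overline{PQ}}}$.

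Your route makes explicit where the reducedness of $\Sigma_{\tM}$ and the positivity of averaged roots enter, which the paper's root-space argument leaves implicit. The price is reliance on the untwisted description of the Levi of $\overline{PQ}$.

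Two small corrections. First, $\Sigma_P\cap\Sigma_{Q^\op}$ need not be a single $\theta$-orbit with all averages equal to $\talpha$. For the outer automorphism of $\GL_3$, with $P$ a $\theta$-stable Borel and $Q$ its opposite, it is $\{\alpha_1,\alpha_2,\alpha_1+\alpha_2\}$, with averages $\talpha,\talpha,2\talpha$. You only use that these averages are positive multiples of $\talpha$, so nothing breaks. Second, (ii) is insensitive to the sign of $\talpha$, so in (ii)$\Rightarrow$(iii) you must tacitly take $\talpha\in\Sigma_{\tP}$; the statement itself leaves this normalization implicit.
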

\begin{proof}
Since (i) is equivalent to $\Sigma_{\tP}\setminus\{\talpha\}=\Sigma_{\tQ}\setminus\{-\talpha\}$ 
and (ii) is equivalent to $\fa^*_{\tP,+}\cap \fa^*_{\tQ,+}=\{\tlambda\in \fa^*_{\tP,+} \mid \langle \tlambda, \talpha^\vee \rangle=0\}$, we obtain the equivalence of \textup{(i)} and \textup{(ii)}.
Hence, it is sufficient to prove that \textup{(i)} and \textup{(iii)} are equivalent. 
In the proof below, we note that $P, Q \in \cP^G(M)$ are $\theta$-stable.
For $\tbeta\in\Sigma_{\tM}$, we define the root space $\fg_{\tbeta}$ associated with $\tbeta$ by
\[
\fg_{\tbeta}\coloneqq\{X\in\fg\mid \exists m\in\Z_{>0},\;\; \forall t\in T_{\tM}, \;\; \Ad(t)X=\tbeta(t^m)X \},
\]
where $\fg$ denotes the Lie algebra of $G$ and we identify $\tbeta$ with an element of $X^*(T_{\tM})$.


We first show that (i) implies (iii).
Suppose $\tP\mid^{\talpha}\tQ$. 
Take an element $\tbeta\in \Sigma_{\tP}\cap\Sigma_{\tQ}$. 
Then, $\fg_{\tbeta}$ is a subspace of $\mathrm{Lie}(N_{\overline{PQ}})$, 
hence $\tbeta|_{\fa_{\widetilde{\overline{PQ}}}}\neq 0$.
On the other hand, it follows from $\tP\mid^{\talpha}\tQ$ that $\alpha\in\Sigma_{\tP}$ and $-\alpha\in\Sigma_{\tQ}$, hence $\talpha|_{\fa_{\widetilde{\overline{PQ}}}}=0$.
Since $\Sigma_{\tP}\cap\Sigma_{\tQ}=\Sigma_{\tP}\setminus\{\talpha\}$, $\talpha$ is the unique element of $\Sigma_{\tP}$ which vanishes on $\fa_{\widetilde{\overline{PQ}}}$. Thus, we obtain
$\Delta_{\tP}^{\widetilde{\overline{PQ}}}=\{\talpha\}$.

Next we show that (iii) implies (i).
Suppose $\Delta_{\tP}^{\widetilde{\overline{PQ}}}=\{\talpha\}$.
Then, $(\fa_{\tP}^{\widetilde{\overline{PQ}}})^*$ is generated by $\talpha$. 
This means that, if $\tbeta\in \Sigma_{\tP}\setminus\{\talpha\}$ or $\tbeta\in \Sigma_{\tQ}\setminus\{-\talpha\}$, then $\tbeta|_{\fa_{\widetilde{\overline{PQ}}}}\neq 0$. Hence $\fg_{\tbeta}$ is a subspace of $\mathrm{Lie}(N_{\overline{PQ}})$.
From this we obtain
\[
\mathrm{Lie}(N_{\overline{PQ}})=\bigoplus_{\tbeta\in\Sigma_{\tP}\setminus\{\talpha\}} \fg_{\tbeta}=\bigoplus_{\tbeta\in\Sigma_{\tQ}\setminus\{-\talpha\}} \fg_{\tbeta}.
\]
Therefore, $\Sigma_{\tP}\setminus\{\talpha\}=\Sigma_{\tQ}\setminus\{-\talpha\}$, that is, $\tP\mid^{\talpha}\tQ$.
\end{proof}

As in the usual case, the following correspondence holds.
\begin{lemma}\label{lem:corrFSigma}
    The mapping $\cF^{\tG}(\tM)\ni \tP\mapsto \fa^*_{\tP,+}\in\Sigma(\tG,\tM)$ is bijective. Under this bijection, $\cF_j^{\tG}(\tM)$ corresponds to $\Sigma_j(\tG,\tM)$. 
\end{lemma}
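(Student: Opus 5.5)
Our plan is to reduce the statement to the untwisted correspondence ($\cF^G(M)\ni P\mapsto \fa^*_{P,+}\in\Sigma(G,M)$ bijective, with $\cF^G_j(M)$ matching $\Sigma_j(G,M)$) by taking $\theta$-fixed points. The space $\fa_{\tM}^*$ is the $\theta$-fixed subspace of $\fa_M^*$, and for $\lambda\in\fa_{\tM}^*$ and $\alpha\in\Sigma_M$ we have $\langle\lambda,\talpha^\vee\rangle=\langle\lambda,\alpha^\vee\rangle$, since $\langle\lambda,\theta^j(\alpha^\vee)\rangle=\langle\theta^{-j}\lambda,\alpha^\vee\rangle$. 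Hence $H_{\talpha}=H_\alpha\cap\fa_{\tM}^*$. In particular, the arrangement defining $\Sigma(\tG,\tM)$ is the restriction to $\fa_{\tM}^*$ of the arrangement defining $\Sigma(G,M)$, with the hyperplanes $H_\alpha\supset\fa_{\tM}^*$ (those with $\talpha=0$) discarded. Since $\Sigma_{\tM}$ consists of roots, $\talpha\neq0$ for $\alpha\in\Sigma_M$; this is immediate for $\alpha\in\Sigma_P$ with $P$ $\theta$-stable, because then all $\theta^j(\alpha)$ lie in $\Sigma_P$. Consequently the faces of $\Sigma(\tG,\tM)$ are exactly the nonempty sets $C\cap\fa_{\tM}^*$ with $C\in\Sigma(G,M)$ whose relative interior meets $\fa_{\tM}^*$. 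Equivalently, since $\theta$ permutes the faces of $\Sigma(G,M)$, they are the cones $C\cap\fa_{\tM}^*$ with $C$ a $\theta$-stable face: the relative interior of $C$ meets the fixed space iff $C$ is $\theta$-stable, using the average $\frac1\ell\sum\theta^j\lambda$, which stays in the relative interior of a $\theta$-stable $C$.

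Next we identify these faces with parabolic subsets. For $\tP\in\cF^{\tG}(\tM)$ we have $P\in\cF^G(M)$ with $\theta(P)=P$, so $\fa^*_{P,+}$ is $\theta$-stable; note that $\tP\supset\tM$ forces $P\supset M$. Moreover $\fa^*_{\tP,+}=\fa^*_{P,+}\cap\fa_{\tM}^*$, because $\langle\lambda,\talpha^\vee\rangle=\langle\lambda,\alpha^\vee\rangle$ on $\fa_{\tM}^*$. Conversely, a $\theta$-stable face is $\fa^*_{P,+}$ for a unique $P\in\cF^G(M)$. By uniqueness in the untwisted bijection together with the identity $\theta(\fa^*_{P,+})=\fa^*_{\theta(P),+}$, this $P$ is $\theta$-stable, so $\tP\in\cF^{\tG}(\tM)$. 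This last step uses that $\theta(M)=M$, so that $\theta$ preserves $\cF^G(M)$. Injectivity of the twisted map then follows from two facts. First, $\fa^*_{P,+}\cap\fa_{\tM}^*$ determines $\fa^*_{P,+}$ as the unique face of $\Sigma(G,M)$ whose relative interior contains its relative interior. Second, the untwisted map is injective.

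For the codimension statement, the relative interior of $\fa^*_{P,+}$ (with $P\in\cP^G(L)$) spans $\fa_L^*$, so the span of $\fa^*_{\tP,+}$ is $\fa_L^*\cap\fa_{\tM}^*=\fa_{\tL}^*$. Its codimension in $\fa_{\tM}^*$ is therefore $\dim\fa_{\tM}^{\tL}$, by the decomposition $\fa_{\tM}^*=\fa_{\tL}^*\oplus(\fa^{\tL}_{\tM})^*$ recorded earlier. This matches $\cF^{\tG}_j(\tM)$ with $\Sigma_j(\tG,\tM)$.

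The main obstacle is the careful verification that the faces of the restricted arrangement are precisely the traces of the $\theta$-stable faces, and in particular that a relative interior meeting $\fa_{\tM}^*$ is relatively open in the restricted fan. We would handle this via the sign-vector description of faces: a point $\lambda\in\fa_{\tM}^*$ lies in the relative interior of the face determined by the signs of $\langle\lambda,\alpha^\vee\rangle$ for $\alpha\in\Sigma_M$. These signs are identical to the signs of $\langle\lambda,\talpha^\vee\rangle$, so the two stratifications of $\fa_{\tM}^*$ coincide.
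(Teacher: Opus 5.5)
Your proof is correct, but it takes a different route from the paper's.

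\textbf{The paper's route.} The paper works with the relative roots of $T_{\tM}$. It first matches each $\tL\in\cL^{\tG}(\tM)$ with the span $\fa_{\tL}^*$ of cones, using centralizers of $\theta$-stable split subtori, and so reduces to chambers. For a chamber $C$ it takes the positive system $\Sigma_C\subset\Sigma_{\tM}$. It then identifies the subalgebra $\fg_C$ generated by the $\fg_{\talpha}$, $\talpha\in\Sigma_C$, with $\mathrm{Lie}(N_P)$ for some $\tP\in\cP^{\tG}(\tM)$.

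\textbf{Your route.} You stay entirely within convex geometry. The identity $\langle\lambda,\talpha^\vee\rangle=\langle\lambda,\alpha^\vee\rangle$ on $\fa_{\tM}^*$ shows that $\Sigma(\tG,\tM)$ is the restriction of $\Sigma(G,M)$ to the $\theta$-fixed space. Its faces are then the traces of $\theta$-stable faces. The untwisted bijection together with $\theta(\fa^*_{P,+})=\fa^*_{\theta(P),+}$ forces the corresponding $P$ to be $\theta$-stable.

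\textbf{What each buys.} Your version handles all codimensions at once: the span computation $\fa_L^*\cap\fa_{\tM}^*=\fa_{\tL}^*$ gives the codimension directly, replacing the paper's reduction to chambers. It also proves $\Sigma(\tG,\tM)=\Sigma(G,M,\fa_M^{1-\theta})^\sharp$ along the way, which is Lemma~\ref{lem:indfun2}; in the paper that lemma is deduced from the present one. The paper's version, in exchange, builds the parabolic directly from the chamber via root spaces, without passing through $\Sigma(G,M)$.

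\textbf{Two small points to tidy up.} First, what you actually need is $\talpha^\vee\neq0$, so that $H_{\talpha}$ is a genuine hyperplane. This follows from the same positivity argument applied to coroots, or from averaging a point of the open chamber of a $\theta$-stable $P\in\cP^G(M)$. Second, your averaging needs $\theta^\ell=1$ on all of $\fa_M^*$, not only on $\Delta_P$ as in the paper's choice of $\ell$. This can be arranged because $\theta$ preserves a lattice and the $\theta$-invariant inner product, so it has finite order on $\fa_M^*$.
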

\begin{proof}
    Since the centralizer of a $\theta$-stable $F$-split subtorus in $T_{\tM}$ is a $\theta$-stable Levi subgroup in $\cL^G(M)$ (see, e.g., \cite{Bor91}*{20.4 Proposition}), the map $\tL \longmapsto \fa_{\tL}^*$ from $\cL^{\tG}(\tM)$ to the set of hyperplanes generated by cones in $\Sigma(\tG,\tM)$ is bijective.    
    Therefore, it suffices to show that the map from $\cP^{\tG}_0(\tM)$ to $\Sigma_0(\tG,\tM)$ is bijective. 
    
    Since the injectivity of this map is clear, it remains to prove that it is surjective. 
    For a cone $C\in \Sigma_0(\tG,\tM)$, define
\[
\Sigma_C\coloneqq\{\, \talpha\in\Sigma_{\tM}\mid 
\langle\lambda,\talpha^\vee\rangle\ge 0 \text{ for all } \lambda\in C \,\}.
\]
Let us recall the notation $\fg_{\talpha}$ defined in the proof of Lemma \ref{lem:troot}.
Let $\fg_C$ be the Lie algebra generated by $\fg_{\talpha}$, $\talpha\in\Sigma_C$.
Since the codimension of $C$ is $0$, we have $\Sigma_C\sqcup(-\Sigma_C)=\Sigma_{\tM}$.
Hence $\Sigma_C$ forms a system of positive roots in $\Sigma_{\tM}$, so there exists a parabolic subgroup $\tP \in \cP^{\tG}(\tM)$ such that $\fg_C$ is the Lie algebra of $N_P$.
Thus, we have $\fa^*_{\tP,+}=C$, and the surjectivity is proved.
\end{proof}

\section{\texorpdfstring{$(\tG,\tM)$}{GM}-family}\label{sec:tGtM}

Fix $M\in\cL^G(M_0)$, and let us first recall the notion of a $(G,M)$-family.
A family of smooth functions $\{c_P(\lambda)\mid P\in\cP^G(M)\}$ 
$(\lambda\in \fa_M^*)$ on $\fa_M^*$ is called a $(G,M)$-family if,
whenever $P, Q\in\cP^G(M)$ are adjacent along $\alpha\in\Sigma_M$, 
we have $c_P(\lambda)=c_Q(\lambda)$ for $\lambda\in H_\alpha$.
The following assertion is a fact stated in \cite{Art82}*{p.1297}.
\begin{lemma}\label{lem:GM}
Let $\{c_P(\lambda)\mid P\in\cP^G(M)\}$ $(\lambda\in \fa_M^*)$ be a $(G,M)$-family.
For $L\in \cL^G(M)$ and $Q\in \cP^G(L)$, take $P\in\cP^G(M)$ such that $P\subset Q$, and define $c_Q(\lambda_L)\coloneqq c_P(\lambda_L)$ $(\lambda_L\in\fa_L^*)$.
Then, $c_Q(\lambda_L)$ does not depend on the choice of $P$, and $\{c_Q(\lambda_L)\mid Q\in\cP^G(L)\}$ $(\lambda_L\in \fa_L^*)$ forms a $(G,L)$-family.
\end{lemma}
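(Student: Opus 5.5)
Fix $L\in\cL^G(M)$ and $Q\in\cP^G(L)$. The plan has two parts: first show that $c_Q(\lambda_L)$ does not depend on the choice of $P\subset Q$, then check the adjacency condition for the resulting family.

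\emph{Independence of $P$.} The set $\{P\in\cP^G(M)\mid P\subset Q\}$ is in bijection with $\cP^{M_Q}(M)$ via $P\mapsto P\cap M_Q$. Given two elements $P,P'$ of this set, apply the chain statement recalled in \S\ref{sec:Adja} inside the Levi $M_Q$. This gives
\[
P=R_0\mid^{\alpha_1}R_1\mid^{\alpha_2}\cdots\mid^{\alpha_k}R_k=P',
\]
with every $R_i\subset Q$ and every $\alpha_i\in\Sigma_M^{M_Q}$, i.e.\ a root of $M$ in $M_Q$. Any such $\alpha_i$ vanishes on $\fa_L$, since $\fa_L=\fa_{M_Q}$. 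Hence $\fa_L^*\subset H_{\alpha_i}$, because $\langle\lambda_L,\alpha_i^\vee\rangle=0$ for $\lambda_L\in\fa_L^*$ (the coroot $\alpha_i^\vee$ lies in $\fa_M^L$). The $(G,M)$-family property then gives $c_{R_{i-1}}(\lambda_L)=c_{R_i}(\lambda_L)$ for each $i$. Chaining these equalities gives $c_P(\lambda_L)=c_{P'}(\lambda_L)$. Smoothness of each $c_Q$ on $\fa_L^*$ is clear, since it is a restriction of the smooth function $c_P$.

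\emph{Adjacency condition.} Let $Q\mid^\beta Q'$ in $\cP^G(L)$ with $\beta\in\Sigma_L$, and let $\lambda_L\in\fa_L^*$ satisfy $\langle\lambda_L,\beta^\vee\rangle=0$. The key step, and the main obstacle, is to find $P\subset Q$ and $P'\subset Q'$ in $\cP^G(M)$ that are adjacent along some $\alpha\in\Sigma_M$ with $\lambda_L\in H_\alpha$.

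I would work with the fan $\Sigma(G,M)$. The chambers $\fa^*_{Q,+}$ and $\fa^*_{Q',+}$ are faces of $\Sigma(G,M)$ sharing a common wall of codimension one inside $\fa_L^*$. Take a generic point $\mu$ of that wall. Then $\mu$ is regular for all roots of $M$ except those vanishing on a subspace of codimension one in $\fa_L^*$, i.e.\ those roots whose restriction to $\fa_L$ is proportional to $\beta$. Perturb $\mu$ slightly off $\fa_L^*$ in a generic direction $\nu\in(\fa_M^L)^*$, and consider $\mu+\nu+t\beta'$ for $t$ small of both signs, where $\beta'$ is an extension of $\beta$. Crossing the hyperplane should pass through exactly one wall $H_\alpha$ of $\Sigma(G,M)$. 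This gives chambers $\fa^*_{P,+}$ and $\fa^*_{P',+}$, adjacent along $H_\alpha$ with $\alpha|_{\fa_L}$ a positive multiple of $\beta$. Since $\mu\pm$ lies in the interiors of the cones of $Q$ and $Q'$, we get $P\subset Q$ and $P'\subset Q'$ via the bijection $\cF^G(M)\leftrightarrow\Sigma(G,M)$ together with the face relation. Concretely, $P\subset Q$ because $\fa^*_{Q,+}$ is a face of $\fa^*_{P,+}$.

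Genericity of $\nu$ should ensure that the crossing happens at a single root hyperplane. It should also ensure that $\alpha$ and $\beta$ are related by restriction, so that $\langle\lambda_L,\alpha^\vee\rangle=0$ whenever $\langle\lambda_L,\beta^\vee\rangle=0$. For this I would use that $\alpha^\vee$ projects onto $\fa_L$ as a positive multiple of $\beta^\vee$, since $\lambda_L$ only pairs with the $\fa_L$-component. Granting this, the $(G,M)$-family property gives
\[
c_Q(\lambda_L)=c_P(\lambda_L)=c_{P'}(\lambda_L)=c_{Q'}(\lambda_L),
\]
which is the required $(G,L)$-family condition.
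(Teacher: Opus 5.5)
Your first part (independence of $P$ via the bijection $P\mapsto P\cap M_Q$ and a chain of adjacencies in $\cP^{M_Q}(M)$) is exactly the paper's argument. The gap is in the second part. The pair you try to construct, $P\subset Q$ and $P'\subset Q'$ adjacent along a single root, does not exist in general. If $P\subset Q$ and $P'\subset Q'$, then every $\alpha\in\Sigma_M$ whose restriction to $\fa_L$ is a positive multiple of $\beta$ lies in $\Sigma_P\cap\Sigma_{(P')^{\op}}$, and there can be several such $\alpha$.

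For instance, take $G=\GL_3$, $M$ the diagonal torus and $L=\GL_2\times\GL_1$. Then $\cP^G(L)=\{Q,Q^{\op}\}$ with $Q\mid^\beta Q^{\op}$, and both $e_1-e_3$ and $e_2-e_3$ restrict to $\beta$. So any $P\subset Q$ and $P'\subset Q^{\op}$ differ in at least two roots. In your perturbation picture, genericity of $\nu$ only makes the walls be crossed at distinct points of the segment; it does not reduce their number. If $\nu$ is small compared with $t$, the segment crosses every $H_\alpha$ with $\alpha|_{\fa_L}\in\R_{>0}\beta$. In the example these are $H_{e_1-e_3}$ and $H_{e_2-e_3}$, and the segment passes through the chamber $x_1>x_3>x_2$, which lies in neither $Q$ nor $Q^{\op}$. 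If instead $t$ is small compared with $\nu$, the endpoints need not lie in chambers having $\fa^*_{Q,+}$ and $\fa^*_{Q',+}$ as faces.

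The repair is to run your first part with $R=\overline{QQ'}$ in place of $Q$. Since $\Delta_Q^{R}=\{\beta\}$, the line $\fa_L^{M_R}$ is spanned by $\beta^\vee$, so $\langle\lambda_L,\beta^\vee\rangle=0$ says exactly that $\lambda_L\in\fa_R^*$. Any $P\subset Q$ and $P'\subset Q'$ are contained in $R$, so they are joined by a chain of adjacencies along roots of $M$ in $M_R$. The coroots of these roots lie in $\fa_M^{M_R}$, so $\lambda_L$ lies on every wall crossed, and therefore $c_Q(\lambda_L)=c_P(\lambda_L)=c_{P'}(\lambda_L)=c_{Q'}(\lambda_L)$. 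This is why the paper proves only the independence statement: the $(G,L)$-family property is that statement applied to $R$. It is used in precisely this form in the proof of Lemma~\ref{lem:tGtM}, where $d_P=d_Q$ on $\fa^*_{\overline{PQ}}$.
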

\begin{proof}    
It is sufficient to prove that, for any $P_1$, $P_2\in\cP^G(M)$ satisfying $P_1\subset Q$ and $P_2\subset Q$, we have $c_{P_1}(\lambda_L)=c_{P_2}(\lambda_L)$, $\lambda_L\in \fa_L^*$. 
The parabolic subgroups $P_1\cap M_Q$ and $P_2\cap M_Q$ can be connected by a sequence of adjacent parabolic subgroups in $\cP^{M_Q}(M)$.
Moreover, the map
$\{P\in \cP^G(M)\mid P\subset Q\}\ni P\mapsto P\cap M_Q\in \cP^{M_Q}(M)$
is a bijection.
Hence $P_1$ and $P_2$ are connected by a sequence of adjacent parabolic subgroups in $\{P\in \cP^G(M)\mid P\subset Q\}$.
Therefore, the proof is completed.
\end{proof}

Choose a Levi subset $\tM\in \cL^{\tG}(\tM_0)$, that is, $M\in \cL^G(M_0)$ and $\theta(M)=M$. 
A family of smooth functions $\{c_{\tP}(\tlambda)\mid \tP\in\cP^{\tG}(\tM)\}$ 
$(\tlambda\in \fa_{\tM}^*)$ on $\fa_{\tM}^*$ is called a $(\tG,\tM)$-family if,
whenever $\tP, \tQ\in\cP^{\tG}(\tM)$ are adjacent along $\talpha\in\Sigma_{\tM}$, 
we have $c_{\tP}(\tlambda)=c_{\tQ}(\tlambda)$ for $\tlambda\in H_{\talpha}$.
The following assertion is a fact stated in \cite{LW13}*{p.47}.
\begin{lemma}\label{lem:tGtM}
Let $\{d_P(\lambda)\mid P\in\cP^G(M)\}$ $(\lambda\in \fa_M^*)$ be a $(G,M)$-family.
For $\tP\in\cP^{\tG}(\tM)$ and $\tlambda\in \fa_{\tM}^*$, define $c_{\tP}(\tlambda)\coloneqq d_P(\tlambda)$.
Then $\{c_{\tP}(\tlambda)\mid \tP\in\cP^{\tG}(\tM)\}$ is a $(\tG,\tM)$-family.
\end{lemma}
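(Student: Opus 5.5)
Suppose $\tP,\tQ\in\cP^{\tG}(\tM)$ are adjacent along $\talpha\in\Sigma_{\tM}$, and let $\tlambda\in H_{\talpha}\subset\fa_{\tM}^*$. We must show $d_P(\tlambda)=d_Q(\tlambda)$. The plan is to reduce this to the untwisted compatibility of the $(G,M)$-family, via Lemma \ref{lem:GM} applied to the Levi of the parabolic $R\coloneqq\overline{PQ}$.

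By Lemma \ref{lem:troot}, $\Delta_{\tP}^{\tR}=\{\talpha\}$ with $\tR=\widetilde{\overline{PQ}}\in\cF^{\tG}_1(\tM)$. Hence $R$ is a $\theta$-stable parabolic subgroup containing both $P$ and $Q$. Let $L=M_R$. Then $\fa_{\tR}=\fa_L^{\theta}$, and $\fa_{\tM}=\fa_{\tR}\oplus\fa_{\tM}^{\tR}$ with $\fa_{\tM}^{\tR}$ one-dimensional, spanned by $\talpha^\vee$.

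The key claim is $H_{\talpha}=\fa_{\tR}^*$, i.e. that $\tlambda\in\fa_{\tM}^*$ with $\langle\tlambda,\talpha^\vee\rangle=0$ lies in $\fa_L^*$. Dually, $\fa_{\tM}^*=\fa_{\tR}^*\oplus(\fa_{\tM}^{\tR})^*$, where $\fa_{\tR}^*$ is the annihilator of $\fa_{\tM}^{\tR}$ in $\fa_{\tM}^*$. This annihilator is exactly $\{\tlambda : \langle\tlambda,\talpha^\vee\rangle=0\}$, because $\fa_{\tM}^{\tR}=\R\talpha^\vee$ by the basis fact recalled from \cite{LW13}*{Lemmes 2.7.1, 2.7.2}. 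Hence $\tlambda\in\fa_{\tR}^*\subset\fa_L^*$.

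Given the claim, the $(G,M)$-family $d$ restricted to $\fa_L^*$ is independent of the choice of $P'\in\cP^G(M)$ with $P'\subset R$, by Lemma \ref{lem:GM}. Since $P,Q\subset R$, this gives $d_P(\tlambda)=d_R(\tlambda)=d_Q(\tlambda)$. Smoothness of $c_{\tP}$ on $\fa_{\tM}^*$ is immediate, since it is the restriction of $d_P$ to a linear subspace.

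The main obstacle is the identification $\fa_{\tM}^{\tR}=\R\talpha^\vee$, i.e. that the coroot $\talpha^\vee$ (an average of ordinary coroots) spans the relevant $\theta$-fixed complement. I will take this directly from the cited basis statement for $(\Delta_{\tP}^{\tR})^\vee$. If needed, it can be checked by hand: each $\theta^j(\alpha^\vee)$ lies in $\fa_M^L$, since $\theta^j(\alpha)$ vanishes on $\fa_L$ because $\fa_L$ is $\theta$-stable. So $\talpha^\vee$ is a nonzero $\theta$-fixed vector of $\fa_M^L$, hence lies in $\fa_{\tM}^{\tR}$, which has dimension $1$.
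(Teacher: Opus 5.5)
Your proposal is correct and follows the paper's proof essentially step for step: Lemma~\ref{lem:troot}(iii) gives $\Delta_{\tP}^{\widetilde{\overline{PQ}}}=\{\talpha\}$, the basis property of $\talpha^\vee$ in $\fa_{\tP}^{\widetilde{\overline{PQ}}}$ gives $H_{\talpha}\subset\fa^*_{\widetilde{\overline{PQ}}}\subset\fa^*_{\overline{PQ}}$, and Lemma~\ref{lem:GM} then yields $d_P(\tlambda)=d_Q(\tlambda)$ there. (Your optional by-hand check tacitly uses that $\alpha$ itself vanishes on $\fa_L$, whereas the definition of $\Delta_{\tP}^{\widetilde{\overline{PQ}}}$ only gives vanishing on the $\theta$-fixed part, so it would need one more line; the main argument, like the paper's, just cites the basis fact.)
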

\begin{proof}
Assume that $\tP, \tQ \in \cP^{\tG}(\tM)$ and $\tP \mid^{\talpha} \tQ$. 
It suffices to prove the inclusion $H_{\talpha} \subset \fa_{\widetilde{\overline{PQ}}}^*$.
Indeed, by Lemma \ref{lem:GM}, we have $d_P(\lambda)=d_Q(\lambda)$ for every $\lambda\in\fa_{\overline{PQ}}^*$.

Let $\lambda \in H_{\talpha}$. 
Then $\langle \lambda, \talpha^\vee \rangle = 0$. 
By Lemma \ref{lem:troot} (iii), we have $\Delta_P^{\widetilde{\overline{PQ}}} = \{\talpha \}$, and $\talpha^\vee$ forms a basis of $\fa_{\tP}^{\widetilde{\overline{PQ}}}$. 
Hence, $\lambda$ is orthogonal to $\fa_{\tP}^{\widetilde{\overline{PQ}}}$, that is, $\lambda \in \fa_{\widetilde{\overline{PQ}}}^*$. 
Therefore, we obtain $H_{\talpha} \subset \fa_{\widetilde{\overline{PQ}}}^*$.
\end{proof}

From the above two lemmas, we obtain the following statement.
\begin{proposition}\label{prop:GM}
    Let $M\in\cL^G(M_0)$ be a Levi subgroup, and let $\tL\in \cL^{\tG}(\tM_0)$ be a Levi subset satisfying $L\supset M$. 
    Let $\{d_P(\lambda)\mid P\in\cP^G(M)\}$ $(\lambda\in \fa_M^*)$ be a $(G,M)$-family, and for $\tQ\in\cP^{\tG}(\tL)$ and $\tlambda_L\in \fa_{\tL}^*$, define $c_{\tQ}(\tlambda_L)\coloneqq d_Q(\tlambda_L)$.
    Then $\{c_{\tQ}(\tlambda_L)\mid \tQ\in\cP^{\tG}(\tL)\}$ $(\tlambda_L\in \fa_{\tL}^*)$ is a $(\tG,\tL)$-family.
\end{proposition}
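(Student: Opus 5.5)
The plan is to combine Lemma \ref{lem:GM} and Lemma \ref{lem:tGtM}, in that order. First we pass from the $(G,M)$-family to a $(G,L)$-family. Then we restrict that $(G,L)$-family to the $\theta$-fixed subspace.

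\textbf{Step 1.} Since $\tL\in\cL^{\tG}(\tM_0)$, $L\in\cL^G(M_0)$ is $\theta$-stable, and $L\supset M$ gives $L\in\cL^G(M)$. For $Q\in\cP^G(L)$, choose $P\in\cP^G(M)$ with $P\subset Q$, and set $d_Q(\lambda_L)\coloneqq d_P(\lambda_L)$ for $\lambda_L\in\fa_L^*$. By Lemma \ref{lem:GM}, this is independent of the choice of $P$, and $\{d_Q(\lambda_L)\mid Q\in\cP^G(L)\}$ is a $(G,L)$-family. In particular, the symbol $d_Q$ in the statement is well defined.

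Here $\fa_L^*\subset\fa_M^*$ under the identification of \S\ref{sec:setup}. Also $\fa_{\tL}^*\subset\fa_L^*$, since it is the $\theta$-fixed part. So $d_Q(\tlambda_L)$ makes sense for $\tlambda_L\in\fa_{\tL}^*$.

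\textbf{Step 2.} Apply Lemma \ref{lem:tGtM} to the $(G,L)$-family of Step 1, with $M$ replaced by $L$ and $\tM$ by $\tL$. This is legitimate because the lemma was proved for an arbitrary Levi subset in $\cL^{\tG}(\tM_0)$ together with a $(G,\cdot)$-family on the underlying group. It gives that $c_{\tQ}(\tlambda_L)\coloneqq d_Q(\tlambda_L)$, for $\tQ\in\cP^{\tG}(\tL)$, is a $(\tG,\tL)$-family.

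The one point to check is that each $\tQ\in\cP^{\tG}(\tL)$ has underlying $Q\in\cP^G(L)$. This holds by definition: $\tQ\in\cF^{\tG}(\tM_0)$ with $\tM_Q=\tL$, so $M_Q=L$.

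\textbf{Main obstacle.} This is essentially bookkeeping. The only care needed is to confirm that the identifications $\fa_{\tL}^*\subset\fa_L^*\subset\fa_M^*$ are compatible. Then evaluating the induced $(G,L)$-family at $\tlambda_L$ agrees with evaluating $d_P$ at $\tlambda_L$ for any $P\subset Q$, which is exactly how the statement defines $c_{\tQ}$.
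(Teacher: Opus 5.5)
Your proposal is correct and is exactly the paper's argument. The paper deduces the proposition by first applying Lemma \ref{lem:GM} to obtain the $(G,L)$-family, and then applying Lemma \ref{lem:tGtM} with $\tM$ replaced by $\tL$; your checks that $\tQ\in\cP^{\tG}(\tL)$ forces $Q\in\cP^G(L)$ and that $\fa_{\tL}^*\subset\fa_L^*\subset\fa_M^*$ are the bookkeeping the paper leaves implicit.
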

\begin{proof}
    This statement follows from Lemmas \ref{lem:GM} and \ref{lem:tGtM}.
\end{proof}

\section{Induced fan}\label{sec:inducedfan}

For an arbitrary vector subspace $\mathfrak{U}$ of $\fa_M^*$, we define the polyhedral fan $\Sigma(G,M,\mathfrak{U})^\sharp$ on $\mathfrak{U}^\perp$ by
\[
\Sigma(G,M,\mathfrak{U})^\sharp\coloneqq \{ \cC\cap \mathfrak{U}^\perp \mid \cC\in\Sigma(G,M) \}.
\]
The fan $\Sigma(G,M,\mathfrak{U})^\sharp$ is called the induced fan.
For details, see \cite{FL11}*{\S7}.

The following statement concerns induced fans used in the discussion of \cite{FLM11}*{\S2.4}.
\begin{lemma}\label{lem:indfun1}
    Let $M\in\cL^G(M_0)$ and $L\in\cL^G(M)$. 
    Then one has $\Sigma(G,L)=\Sigma(G,M,\fa_M^L)^\sharp$.
\end{lemma}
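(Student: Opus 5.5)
The plan is to reduce both sides to statements about hyperplane arrangements and then use the fact that restricting an arrangement to a subspace yields the arrangement of restricted roots. First I identify $(\fa_M^L)^\perp$ inside $\fa_M^*$ with $\fa_L^*$, using the decomposition $\fa_M^*=\fa_L^*\oplus(\fa_M^L)^*$ recalled in \S\ref{sec:setup}. Then $\Sigma(G,M,\fa_M^L)^\sharp=\{\cC\cap\fa_L^*\mid \cC\in\Sigma(G,M)\}$ is a collection of cones in $\fa_L^*$. By the bijection $\cF^G(L)\ni Q\mapsto \fa^*_{Q,+}\in\Sigma(G,L)$ recalled in \S\ref{sec:Adja}, the statement becomes a comparison of two cone systems in $\fa_L^*$.

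The first key step compares the hyperplanes. I fix a Weyl-group-invariant inner product on $\fa_{M_0}$, so that $H_\alpha=\alpha^\perp$ and the decomposition $\fa_M=\fa_L\oplus\fa_M^L$ is orthogonal. For $\alpha\in\Sigma_M$ and $\lambda\in\fa_L^*$, the pairing $\langle\lambda,\alpha^\vee\rangle$ only sees the $\fa_L$-component of $\alpha^\vee$. This component is a positive multiple of $\beta^\vee$, where $\beta\in\Sigma_L$ is the reduced root proportional to $\alpha|_{\fa_L}$, whenever that restriction is nonzero. Moreover, every $\beta\in\Sigma_L$ arises this way, since the roots of $T_L$ are the nonzero restrictions of roots of $T_M$. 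Hence:
\begin{itemize}
\item if $\alpha|_{\fa_L}=0$, then $H_\alpha\supset\fa_L^*$ and $\langle\lambda,\alpha^\vee\rangle=0$ on $\fa_L^*$;
\item otherwise $H_\alpha\cap\fa_L^*=H_\beta$, and the sign of $\langle\lambda,\alpha^\vee\rangle$ equals the sign of $\langle\lambda,\beta^\vee\rangle$.
\end{itemize}
So the arrangement $\{H_\alpha\}$ restricted to $\fa_L^*$ is exactly $\{H_\beta\}_{\beta\in\Sigma_L}$, and the sign of each $\beta$ is compatible with the sign of every $\alpha$ restricting to it.

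The second step compares the cones. Every cone of $\Sigma(G,M)$ is a closed sign-cone
\[
\cC_\epsilon=\{\lambda\mid \epsilon_\alpha\langle\lambda,\alpha^\vee\rangle\ge0 \ \text{for } \epsilon_\alpha\neq0,\ \langle\lambda,\alpha^\vee\rangle=0\ \text{for }\epsilon_\alpha=0\}
\]
for a realizable sign vector $\epsilon\in\{+,0,-\}^{\Sigma_M}$. The same description holds for $\Sigma(G,L)$ with $\Sigma_L$ in place of $\Sigma_M$.

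For $\Sigma(G,M,\fa_M^L)^\sharp\subset\Sigma(G,L)$, I take $\lambda$ in the relative interior of $\cC_\epsilon\cap\fa_L^*$. Its $\Sigma_L$-sign vector $\epsilon'$ is determined by $\epsilon$ via the first step. The intersection $\cC_\epsilon\cap\fa_L^*$ is then cut out by prescribed conditions $\ge0$ or $=0$ for every $\beta\in\Sigma_L$. The conditions coming from roots vanishing on $\fa_L$ are automatically satisfied, so they impose nothing. This shows the intersection equals the closed face $\cC'_{\epsilon'}$ of $\Sigma(G,L)$.

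For the reverse inclusion, I take a cone $\cC'\in\Sigma(G,L)$ and $\lambda$ in its relative interior. I let $\cC$ be the unique cone of $\Sigma(G,M)$ containing $\lambda$ in its relative interior. The same sign comparison then gives $\cC\cap\fa_L^*=\cC'$.

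The main obstacle I expect is the first step, namely the assertion that the $\fa_L$-projection of $\alpha^\vee$ is a positive multiple of a coroot $\beta^\vee$ of $\Sigma_L$, so that signs, and not just zero sets, match. If this is awkward with coroots, I would instead work throughout with the invariant inner product and roots, via $\langle\lambda,\alpha^\vee\rangle=2(\lambda,\alpha)/(\alpha,\alpha)$, where it is just the orthogonal projection $\alpha\mapsto\alpha|_{\fa_L}$. Alternatively, one could argue directly with parabolics: for $P\in\cP^G(M)$, one has $\fa^*_{P,+}\cap\fa_L^*=\fa^*_{Q,+}$ for the $Q\in\cF^G(L)$ generated by $P$ and $L$, together with the surjectivity argument given above.
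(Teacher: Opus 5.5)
Your proof is correct, but your main route differs from the paper's.

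\textbf{The paper's route.} The paper stays inside the parabolic formalism. It uses the bijection $\cF^G(M)\to\Sigma(G,M)$ and the identity $\fa^*_{P,+}\cap\fa^*_Q=\fa^*_{Q,+}$ for $P\subset Q$ in $\cF^G(M)$. From these, the cones of $\Sigma(G,L)$ are exactly the cones $\fa^*_{Q,+}$, $Q\in\cF^G(L)\subset\cF^G(M)$, of $\Sigma(G,M)$ that lie in $\fa_L^*$. This gives $\Sigma(G,L)\subset\Sigma(G,M,\fa_M^L)^\sharp$ at once. For the other inclusion, the paper implicitly uses that $\fa_L^*$ is the intersection of the walls $H_\alpha$ with $\alpha|_{\fa_L}=0$. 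Hence each $\fa^*_{P,+}\cap\fa_L^*$ is a face of $\fa^*_{P,+}$ contained in $\fa_L^*$, i.e.\ some $\fa^*_{Q,+}$ with $Q\in\cF^G(L)$. This is essentially your closing ``alternatively'' remark.

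\textbf{Your route.} Your sign-vector argument instead proves a general principle: restricting an arrangement fan to a subspace yields the fan of the restricted arrangement. This holds as soon as each functional $\langle\cdot,\alpha^\vee\rangle|_{\fa_L^*}$ is either zero or a positive multiple of some $\langle\cdot,\beta^\vee\rangle$ with $\beta\in\Sigma_L$.

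The cost is that you need this root/coroot sign compatibility for all of $\Sigma_L$. You correctly flag this. It holds with Arthur's convention that coroots of $\Sigma_L$ are projections of coroots from $\fa_{M_0}$, which can be checked via the invariant inner product. The paper's identity, by contrast, only involves simple coroots, where the compatibility is part of the standard setup.

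\textbf{A small imprecision.} The $\Sigma_L$-sign vector of a relative interior point of $\cC_\epsilon\cap\fa_L^*$ is not literally read off from $\epsilon$. Roots restricting to $\beta$ and to $-\beta$ can carry conflicting signs, which force $\langle\lambda,\beta^\vee\rangle=0$. The conclusion still stands, because any intersection of closed half-spaces and walls of the arrangement equals the closed face attached to its relative-interior sign vector.

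\textbf{What your approach buys.} Your argument never uses that $\fa_L^*$ is a flat, so it carries over verbatim to Lemma~\ref{lem:indfun2}. There one has $\langle\lambda,\alpha^\vee\rangle=\langle\lambda,\talpha^\vee\rangle$ for $\lambda\in\fa_{\tM}^*$. However, $\fa_{\tM}^*$ is in general not an intersection of walls, and the closed chambers of $\Sigma(G,M)$ need not meet it in faces, so the paper's face argument does not transfer directly.
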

\begin{proof}
For $P$, $Q\in\cF^G(M)$ with $P\subset Q$, we have $\fa_{P,+}^*\cap \fa_Q^*=\fa_{Q,+}^*$ by the definitions. 
This implies the assertion.
\end{proof}

In the following, we consider the induced fan required for the twisted version.
Fix $\tM\in\cL^{\tG}(\tM_0)$ once and for all. 
Here $M\in\cL^G(M_0)$ and $\theta(M)=M$. 
Put $\fa_M^{1-\theta}\coloneqq\langle H-\theta(H)\mid H\in\fa_M\rangle$. 
Then we have
\[
\fa_M=\fa_{\tM}\oplus \fa_M^{1-\theta}.
\]
The space $\fa_{\tM}^*$ is the annihilator of $\fa_M^{1-\theta}$ in $\fa_M^*$. 
\begin{lemma}\label{lem:indfun2}
Under the above setting, we have $\Sigma(\tG,\tM)=\Sigma(G,M,\fa_M^{1-\theta})^\sharp$.
\end{lemma}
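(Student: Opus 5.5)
The plan is to compare the two fans as fans of hyperplane arrangements on the same space. First, $(\fa_M^{1-\theta})^\perp$ is exactly $\fa_{\tM}^*$, as noted just before the statement. So $\Sigma(G,M,\fa_M^{1-\theta})^\sharp$ and $\Sigma(\tG,\tM)$ are both fans on $\fa_{\tM}^*$.

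By definition, $\Sigma(G,M)$ is the fan cut out by the arrangement $\{H_\alpha\}_{\alpha\in\Sigma_M}$. Its cones are the closures of the chambers together with all their faces. Equivalently, they are the nonempty sets of the form
\[
C_\varepsilon=\{\lambda\in\fa_M^*\mid \varepsilon_\alpha\langle\lambda,\alpha^\vee\rangle\ge 0 \ (\varepsilon_\alpha=\pm1),\ \langle\lambda,\alpha^\vee\rangle=0\ (\varepsilon_\alpha=0)\},
\]
where $\varepsilon=(\varepsilon_\alpha)$ is the sign vector of some point of $\fa_M^*$. Likewise, $\Sigma(\tG,\tM)$ consists of the analogous sets in $\fa_{\tM}^*$ built from the coroots $\talpha^\vee$, $\talpha\in\Sigma_{\tM}$.

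The key computation is the following. Let $\lambda\in\fa_{\tM}^*$, so $\theta(\lambda)=\lambda$. Since the actions of $\theta$ on $X^*(T_M)$ and $X_*(T_M)$ are contragredient, the pairing is $\theta$-invariant. Hence
\[
\langle\lambda,\theta^j(\alpha^\vee)\rangle=\langle\theta^{-j}\lambda,\alpha^\vee\rangle=\langle\lambda,\alpha^\vee\rangle\quad\text{for all } j.
\]
Averaging over $j$ gives $\langle\lambda,\alpha^\vee\rangle=\langle\lambda,\talpha^\vee\rangle$ for every $\alpha\in\Sigma_M$. Thus the restriction to $\fa_{\tM}^*$ of each linear form $\langle\cdot,\alpha^\vee\rangle$ is the form $\langle\cdot,\talpha^\vee\rangle$. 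Moreover, every element of $\Sigma_{\tM}$ arises as some $\talpha$, by definition.

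I also need $\talpha^\vee\ne0$, so that no $H_\alpha$ contains $\fa_{\tM}^*$. To see this, choose a $\theta$-stable $P\in\cP^G(M)$ with $\alpha\in\Sigma_P$, which exists since $\cP^{\tG}(\tM)\neq\emptyset$. Then every $\theta^j(\alpha)$ lies in $\Sigma_P$. Hence $\talpha^\vee$ is a positive combination of coroots of $\Sigma_P$, which pairs strictly positively with any regular element of $\fa^*_{P,+}$. Consequently $H_\alpha\cap\fa_{\tM}^*=H_{\talpha}$, and the restricted arrangement $\{H_\alpha\cap\fa_{\tM}^*\}$ coincides with $\{H_{\talpha}\}_{\talpha\in\Sigma_{\tM}}$.

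With this in hand, I prove the two inclusions.

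\emph{Inclusion of $\Sigma(G,M,\fa_M^{1-\theta})^\sharp$ in $\Sigma(\tG,\tM)$.} Intersecting $C_\varepsilon$ with $\fa_{\tM}^*$ gives a set cut out in $\fa_{\tM}^*$ by the conditions $\varepsilon_\alpha\langle\lambda,\talpha^\vee\rangle\ge0$ or $=0$. This is a closed, convex, nonempty (it contains $0$) intersection of closed half-spaces and hyperplanes from the arrangement $\{H_{\talpha}\}$. Any such set is a union of open cells of the arrangement, and by convexity it equals the closure of its unique relative-interior cell. Hence it is a cone of $\Sigma(\tG,\tM)$.

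\emph{Inclusion of $\Sigma(\tG,\tM)$ in $\Sigma(G,M,\fa_M^{1-\theta})^\sharp$.} Given a cone $\tilde C$ of $\Sigma(\tG,\tM)$, take $\lambda_0$ in its relative interior and let $\varepsilon$ be the sign vector of $\lambda_0$ for the big arrangement, i.e. $\varepsilon_\alpha=\operatorname{sign}\langle\lambda_0,\alpha^\vee\rangle$. By the computation above, $\varepsilon_\alpha=\operatorname{sign}\langle\lambda_0,\talpha^\vee\rangle$. Therefore $C_\varepsilon\cap\fa_{\tM}^*$ is exactly the closed sign cone of $\lambda_0$ for the $\talpha$-arrangement, namely $\tilde C$.

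I expect the main obstacle to be the fan-theoretic bookkeeping rather than the root computation. One must make sure that $\Sigma(\cdot)$ is understood to include all faces, and justify that a closed sign-condition set is a single face, not merely a union of faces. One must also check that the nondegeneracy $\talpha^\vee\ne0$ holds uniformly. As a cross-check, I would also verify the chamber level via Lemma \ref{lem:corrFSigma}: for $\theta$-stable $P$ one has $\fa^*_{P,+}\cap\fa_{\tM}^*=\fa^*_{\tP,+}$, since by the displayed identity the defining inequalities for $\alpha\in\Delta_P$ become those for $\talpha\in\Delta_{\tP}$.
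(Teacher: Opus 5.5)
Your proof is correct. It shares its key computation with the paper: for $\theta$-fixed $\lambda$ one has $\langle\lambda,\alpha^\vee\rangle=\langle\lambda,\talpha^\vee\rangle$. Where you differ is in how the two fans are compared.

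\emph{The paper's route.} It indexes the cones of $\Sigma(\tG,\tM)$ by $\tP\in\cF^{\tG}(\tM)$ via Lemma~\ref{lem:corrFSigma}. The identity gives $\fa^*_{P,+}\cap\fa^*_{\tM}=\fa^*_{\tP,+}$, which is your ``cross-check''. From this it concludes $\Sigma(\tG,\tM)\subset\Sigma(G,M,\fa_M^{1-\theta})^\sharp$. The reverse inclusion is handled by completeness: a complete fan contained in another fan must equal it, because relative interiors of distinct cones of a fan are disjoint.

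\emph{Your route.} You stay at the level of hyperplane arrangements. You show the restricted arrangement $\{H_\alpha\cap\fa^*_{\tM}\}$ is exactly $\{H_{\talpha}\}$, and then prove both inclusions with sign vectors.

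\emph{What each buys.} Your argument is self-contained. It needs neither Lemma~\ref{lem:corrFSigma}, whose surjectivity uses a Lie-algebra argument, nor the fact that the induced fan is a fan. It also makes explicit the nondegeneracy $\talpha^\vee\neq 0$, which the paper uses tacitly. The paper's argument, in return, identifies which cone of the induced fan corresponds to which parabolic subset $\tP$. That is the form in which the lemma is used in Proposition~\ref{prop:indfun} and Theorem~\ref{thm:main}.

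\emph{One point to tighten.} In your first inclusion you say that any intersection of closed half-spaces and hyperplanes from the arrangement equals the closure of a single cell. That is false in general: a single closed half-space, for instance, is a union of several chambers. It holds here only because every $H_{\talpha}$ contributes a sign condition. Each $\talpha\in\Sigma_{\tM}$ arises from some $\alpha\in\Sigma_M$ with a prescribed $\varepsilon_\alpha$, so every linear form $\langle\cdot,\talpha^\vee\rangle$ is either strictly signed on the relative interior or vanishes on the whole set. You should say this explicitly.
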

\begin{proof}
Since $M$ is $\theta$-stable, $\theta$ acts on $\Sigma_M$. 
Hence, for $\lambda\in\fa_{\tM}^*$, we have
\begin{equation*}\label{eq:invariant}
\langle \lambda,\alpha^\vee\rangle=\langle\lambda, \theta^m(\alpha)^\vee\rangle,\qquad \forall\alpha\in\Delta_P,\quad \forall m\in\Z .  
\end{equation*}
It follows from this equality that, for $\tP\in\cF^{\tG}(\tM)$ we have $\fa_{P,+}^*\cap \fa_{\tM}^*=\fa_{\tP,+}^*$. 
This implies that the cones $\fa_{\tP,+}^*$, $\tP\in\cF^{\tG}(\tM)$ are cones of the induced fan $\Sigma(G,M,\fa_M^{1-\theta})^\sharp$. 
By Lemma \ref{lem:corrFSigma}, the cones $\fa_{\tP,+}^*$, $\tP\in \cF^{\tG}(\tM)$, form the polyhedral fan $\Sigma(\tG,\tM)$. 
Therefore, since $\Sigma(\tG,\tM)$ is a complete polyhedral fan on $\fa_{\tM}^*$, no additional cone can appear in $\Sigma(G,M,\fa_M^{1-\theta})^\sharp$. 
Hence the two polyhedral fans coincide.
\end{proof}

From the above two lemmas, we obtain the following statement.
\begin{proposition}\label{prop:indfun}
    Let $M\in\cL^G(M_0)$ be a Levi subgroup, and let $\tL\in \cL^{\tG}(\tM_0)$ be a Levi subset satisfying $L\supset M$. 
    Under this setting, note that we have the direct sum decomposition
$\fa_M=\fa_{\tL}\oplus\fa_M^L\oplus\fa_L^{1-\theta}$. 
    Then, we have $\Sigma(\tG,\tL)=\Sigma(G,M,\fa_M^L\oplus\fa_L^{1-\theta})^\sharp$.
\end{proposition}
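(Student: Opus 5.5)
The plan is to compose the two preceding lemmas, using the transitivity of induced fans. Write $\mathfrak{U}_1\coloneqq\fa_M^L$ and $\mathfrak{U}_2\coloneqq\fa_L^{1-\theta}$. The annihilators satisfy
\[
(\mathfrak{U}_1\oplus\mathfrak{U}_2)^\perp=\fa_{\tL}^*\subset\fa_L^*=\mathfrak{U}_1^\perp .
\]
Here the perp is taken in $\fa_M^*$, and $\fa_L^*$ is viewed inside $\fa_M^*$ as in \S\ref{sec:setup}.

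\textbf{Step 1 (direct sum decomposition).} First I would check the decomposition $\fa_M=\fa_{\tL}\oplus\fa_M^L\oplus\fa_L^{1-\theta}$. Since $L$ is $\theta$-stable (because $\tL$ is a Levi subset), we have $\fa_M=\fa_L\oplus\fa_M^L$. Applying the decomposition $\fa_L=\fa_{\tL}\oplus\fa_L^{1-\theta}$ (recorded before Lemma \ref{lem:indfun2}, with $\tL$ in place of $\tM$) gives the claim. Dually, the annihilator of $\fa_M^L\oplus\fa_L^{1-\theta}$ in $\fa_M^*$ is the annihilator of $\fa_L^{1-\theta}$ inside $\fa_L^*$, which is $\fa_{\tL}^*$. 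So both sides of the asserted identity are fans on the same space $\fa_{\tL}^*$.

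\textbf{Step 2 (chain the two lemmas).} By Lemma \ref{lem:indfun2} applied to $\tL$, we have $\Sigma(\tG,\tL)=\Sigma(G,L,\fa_L^{1-\theta})^\sharp$. This is the set of cones $\cC'\cap\fa_{\tL}^*$ with $\cC'\in\Sigma(G,L)$. By Lemma \ref{lem:indfun1}, every such $\cC'$ has the form $\cC\cap\fa_L^*$ for some $\cC\in\Sigma(G,M)$, and conversely every such intersection lies in $\Sigma(G,L)$. Therefore
\[
\Sigma(\tG,\tL)=\{(\cC\cap\fa_L^*)\cap\fa_{\tL}^*\mid \cC\in\Sigma(G,M)\}=\{\cC\cap\fa_{\tL}^*\mid\cC\in\Sigma(G,M)\},
\]
and by Step 1 the right-hand side is $\Sigma(G,M,\fa_M^L\oplus\fa_L^{1-\theta})^\sharp$.

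\textbf{Main obstacle.} The only delicate point is making sure the perps are compatible. The induced fan in Lemma \ref{lem:indfun2} (for $\tL$) is defined with $\perp$ taken in $\fa_L^*$, while the target statement takes it in $\fa_M^*$. So I must confirm that the annihilator of $\fa_L^{1-\theta}$ in $\fa_L^*$ equals the annihilator of $\fa_M^L\oplus\fa_L^{1-\theta}$ in $\fa_M^*$, under the identification of $\fa_L^*$ with the annihilator of $\fa_M^L$. This is immediate from Step 1, since $\fa_L^*\subset\fa_M^*$ annihilates $\fa_M^L$.

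Lemma \ref{lem:indfun1} is used as an equality of sets of cones, so both inclusions transfer. No completeness argument is needed beyond what those lemmas already supply.
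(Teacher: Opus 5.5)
Your proposal is correct and is the paper's argument: the paper proves this by combining Lemma \ref{lem:indfun1} (for $M\subset L$) with Lemma \ref{lem:indfun2} (applied to $\tL$), exactly the chaining you carry out. You also verify explicitly that the annihilator of $\fa_M^L\oplus\fa_L^{1-\theta}$ in $\fa_M^*$ is $\fa_{\tL}^*$, which is the only detail the paper leaves implicit.
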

\begin{proof}
This assertion follows from Lemmas \ref{lem:indfun1} and \ref{lem:indfun2}.
\end{proof}

\section{On the normalization of measures}\label{sec:measure}


We introduce the following notation.
\begin{itemize}
    \item Take a maximal compact subgroup $K=K_\inf K_f$ of $G(\A)=G(F_\inf)G(\A_f)$, which is admissible relative to $M_0$, see \cite{Art81}*{\S1}. 
    \item $A_0$ is the identity component of $T_0(\R)$, which is viewed as a subgroup of $T_0(\A)$ via the diagonal embedding of $\R$ into $F_\inf$. 
    \item For $M\in \cL^G(M_0)$, we set $A_M\coloneqq A_0\cap T_M(\R)$. 
    \item For $P\in\cP^G(M)$, we denote by $\delta_P$ the modular function of $P(\A)$. 
\end{itemize}

Let us fix an inner product on $\fa_0$, which is $W_0^G$-invariant and $\theta$-invariant.
This inner product endows $\fa_{M_0}$ with the structure of a Euclidean space. 
In addition, it determines Haar measures on $\fa_{M}^L$ and $\fa_{\tM}^{\tL}$.
The dual measures on $(\fa_{M}^{L})^*$ and $(\fa_{\tM}^{\tL})^*$ are then fixed accordingly.
Moreover, via the isomorphism $H_M\colon A_M\to \fa_M$, a measure on $A_M$ is also fixed.

Let us fix Haar measures on groups following \cite{Art78}*{\S1}. 
First, fix a Haar measure $dx$ on $G(\A)$. 
Normalize the Haar measure $dk$ on $K$ so that the volume of $K$ is $1$. 
For each $P\in\cF^G(M_0)$, normalize the Haar measure $dn$ on $N_P(\A)$ so that the volume of $N_P(F)\bs N_P(\A)$ is $1$. 
Note that the Iwasawa decomposition $G(\A)=N_P(\A) M_P(\A)K$ holds. 
Then normalize the Haar measure $dm$ on $M_P(\A)$ so that
\[
\int_{G(\A)} f(x)\, dx =\int_{N_P(\A)} \int_{M_P(\A)} f(nmk) \, \delta_P(m)^{-1}  dk\, dm \, dn, \qquad  f\in C_c(G(\A)) 
\]
holds.
Induce a measure $d\widetilde{x}$ on $\tG(\A)$ from the measure $dx$ on $G(\A)$ by
\[
\int_{\tG(\A)}f(\widetilde{x})\, d\widetilde{x}=\int_{G(\A)} f(x\delta) \, dx,\qquad f\in C_c(\tG(\A)).
\]

\section{Intertwining operators}\label{sec:intertwining}

Following the exposition in \cite{FLM11}*{\S2.2}, we briefly recall the intertwining operators.
For $P\in\cP^G(M)$, we introduce the following notation.
\begin{itemize}
    \item $\bar{\cA}_2(P)$ is the Hilbert space completion of 
    \[
    \{ \phi\in C^\inf(M(F)N_P(\A)\bs G(\A) ) \mid \delta_P^{-\frac12}\phi(\cdot x)\in L^2_{\mathrm{disc}}(A_M M(F)\bs M(\A)), \; \forall  x\in G(\A)  \}
    \]
    with respect to the inner product
    \[
    (\phi_1,\phi_2)\coloneqq \int_{A_M M(F) N_P(\A)\bs G(\A)} \phi_1(g)\, \overline{\phi_2(g)} \, \rd g .
    \]
    \item $\cU(\fg_\C)$ is the universal enveloping algebra of the complexified Lie algebra of $G(F_\inf)$, and $\fz$ is the center of $\cU(\fg_\C)$. 
    \item $H_P\colon G(\A)\to \fa_P$ is the extension of $H_M$ to a left $N_P(\A)$- and right $K$-invariant map.
    \item $\cA^2(P)$ is the dense subspace of $\bar{\cA}^2(P)$ consisting of its $K$- and $\fz$-finite vectors, i.e. the space of automorphic forms $\phi$ on $N_P(\A)M(F)\bs G(\A)$ such that for all $k\in K$ the function
    \[
    \delta_P^{-1/2}\phi(\cdot k)\colon M(\A)\to \C
    \]
    is a square-integrable automorphic form on $A_M M(F)\bs M(\A)$.
    \item $L^2_\disc(A_M M(F)\bs M(\A))$ is the discrete part of $L^2(A_M M(F)\bs M(\A))$. 
    \item $\rho(P,\lambda)$ $(\lambda\in \fa^*_{M,\C})$ is the induced representation of $G(\A)$ on $\bar{\cA}^2(P)$ given by
    \[
    (\rho(P,\lambda,y)\phi)(x)
    =
    \phi(xy)e^{\langle \lambda,H_P(xy)-H_P(x)\rangle};
    \]
    it is isomorphic to
    \[
    \Ind_{P(\A)}^{G(\A)}
    \left(
    L^2_{\disc}(A_M M(F)\bs M(\A))
    \otimes
    e^{\langle \lambda,H_M(\cdot)\rangle}
    \right).
    \]
\end{itemize}

For $P$, $Q\in \cP^G(M)$, let
\[
M_{Q|P}(\lambda)\colon \cA^2(P)\to \cA^2(Q),
\qquad
\lambda\in \fa^*_{M,\C},
\]
be the standard intertwining operator (cf. \cite{Art82}*{\S1}), which is the meromorphic continuation in $\lambda$ of the integral
\[
[M_{Q\mid P}(\lambda)\phi](x)
=
\int_{N_Q(\A)\cap N_P(\A)\bs N_Q(\A)}
\phi(nx)
e^{\langle \lambda,H_P(nx)-H_Q(x)\rangle}
\, dn,
\]
for $\phi\in \cA^2(P)$ and $x\in G(\A)$.
These operators satisfy the following properties:
\begin{itemize}
    \item
    $M_{P\mid P}(\lambda)\equiv \mathrm{Id}$ for all $P\in \cP^G(M)$ and $\lambda\in \fa^*_{M,\C}$.
    \item
    For any $P$, $Q$, $R\in \cP^G(M)$ we have $M_{R\mid P}(\lambda)=M_{R\mid Q}(\lambda)\circ M_{Q\mid P}(\lambda)$ for all $\lambda\in \fa^*_{M,\C}$. 
    In particular, $M_{Q\mid P}(\lambda)^{-1}=M_{P\mid Q}(\lambda)$.
    \item $M_{Q\mid P}(\lambda)^*=M_{P|Q}(-\lambda)$
    for any $P$, $Q\in \cP^G(M)$ and $\lambda\in \fa^*_{M,\C}$.
    In particular, $M_{Q\mid P}(\lambda)$ is unitary for $\lambda\in i\fa^*_M$.
    \item
    If $P\mid^\alpha Q$, then $M_{Q\mid P}(\lambda)$ depends only on
    $\langle \lambda,\alpha^\vee\rangle$.
\end{itemize}

Let $P\in \cP^G(M)$ and $\lambda\in i\fa^*_M$. 
For $Q\in \cP^G(M)$ and $\Lambda\in i\fa^*_M$, we define
\[
\cM_Q(P,\lambda,\Lambda)
\coloneqq
M_{Q|P}(\lambda)^{-1}
M_{Q|P}(\lambda+\Lambda)
=
M_{P|Q}(\lambda)
M_{Q|P}(\lambda+\Lambda).
\]
Then, $\{\cM_Q(P,\lambda,\cdot)\mid Q\in \cP^G(M)\}$ is a $(G,M)$-family with values in the space of operators on $\cA^2(P)$, see \cite{Art82}*{p.1310}.

Let $M\in\cL^G(M_0)$ be a Levi subgroup, and let $\tL\in \cL^{\tG}(\tM_0)$ be a Levi subset satisfying $L\supset M$. 
Let $\tlambda\in \fa^*_{\tL,\C}$. 
By Proposition \ref{prop:GM}, 
$\{\cM_Q(P,\tlambda,\cdot)\mid Q\in \cP^{\tG}(\tL)\}$ is a $(\tG,\tL)$-family.
For $\tQ\in\cP^{\tG}(\tL)$, we set
    \[
    \epsilon_{\tQ}(\tLambda)\coloneqq v_{\Delta_{\tQ}}^{-1}\prod_{\talpha\in\Delta_{\tQ}}\langle \tLambda,\talpha^\vee\rangle, \qquad \tLambda\in\fa_{\tM,\C}^*.
    \]
Here, $v_{\Delta_{\tQ}}$ denotes the co-volume of the lattice generated by $\Delta_{\tQ}$ in $(\fa_{\tQ}^{\tG})^*$. 
Then, for $\tlambda$, $\tLambda\in \fa^*_{\tL,\C}$, the limit
\[
\cM_{\tL}(P,\tlambda)
\coloneqq
\lim_{\tLambda\to 0}
\sum_{\tQ\in \cP^{\tG}(\tL)}
\epsilon_{\tQ}(\tLambda)\,
\cM_Q(P,\tlambda,\tLambda)
\]
exists.

\section{The combinatorial formula}\label{sec:combiatorial}

Take $M\in\cL^G(M_0)$ and $P\in\cP^G(M)$.
We also take $\tL\in\cL^{\tG}(\tM_0)$ with $M\subset L$. 
In the following, we put
\[
m\coloneqq \dim \fa_{\tL}^{\tG}.
\]
For $\beta\in\Sigma_M\, (\subset \fa_M^*)$, write $\beta_L$ for the projection of $\beta$ onto $\fa_L^*$. 
Note that $\beta_L\in\Sigma_L$, and that $\tbeta_L$ is the projection of $\beta_L$ onto $\fa_{\tL}^*$. 
Similarly, for $\beta^\vee\in \Sigma_M^\vee$, we denote by $\tbeta^\vee_L$ its projection onto $\fa_{\tL}$.
\begin{itemize}
    \item $\fB_{P,\tL}$ is the set of $m$-tuples $\underline{\beta}=(\beta_1^\vee,\ldots,\beta_m^\vee)$ of elements of $\Sigma_P^\vee$ such that $\tbeta_{1,L}^\vee,\dots,\tbeta_{m,L}^\vee$ form a basis of $\fa_{\tL}^{\tG}$.
\item $\vol(\underline{\beta})$ denotes the co-volume in $\fa_{\tL}^{\tG}$ of the lattice spanned by the basis $\tbeta_{1,L}^\vee,\dots,\tbeta_{m,L}^\vee$.
\item For any $\underline{\beta}=(\beta_1^\vee,\ldots,\beta_m^\vee)\in \fB_{P,\tL}$, we set
\begin{align*}
\Xi_{\tL}(\underline{\beta})
\coloneqq &
\left\{
(Q_1,\ldots,Q_m)\in \cF_1(M)^m \;\middle|\;
\beta_j^\vee\in \fa_M^{Q_j},
\ j=1,\ldots,m
\right\} \\
=& 
\left\{
(\overline{P_1P_1'},\ldots,\overline{P_mP_m'})
\;\middle|\;
P_j\mid^{\beta_j}P_j',
\ j=1,\ldots,m
\right\}.
\end{align*}
\item  For any smooth function $f$ on $\fa_M^*$ and $\mu\in \fa_M^*$, we denote by $D_\mu f$ the directional derivative of $f$ along $\mu$. 
\item For a pair $P_1\mid^\alpha P_2$ of adjacent parabolic subgroups in $\cP(M)$, we write
\[
\delta_{P_1\mid P_2}(\lambda)\coloneqq D_\varpi M_{P_1\mid P_2}(\lambda)
\colon
\cA^2(P_2)\to \cA^2(P_1),
\]
where $\varpi\in \fa_M^*$ is such that $\langle \varpi,\alpha^\vee\rangle=1$.
\item For any $m$-tuple $\cX=(Q_1,\dots,Q_m)\in\Xi_{\tL}(\underline{\beta})$ with $Q_j=\overline{P_jP_j'}$, $P_j\mid^{\beta_j}P_j'$,
we set
\begin{align*}
\Delta_{\cX}(P,\lambda)\coloneqq&    \frac{\vol(\underline{\beta})}{m!}
M_{P_1\mid P}(\lambda)^{-1}
\delta_{P_1\mid P_1'}(\lambda)
M_{P_1'\mid P_2}(\lambda) \cdots \\
& \cdots\;
\delta_{P_{m-1}\mid P_{m-1}'}(\lambda)
M_{P_{m-1}'\mid P_m}(\lambda)
\delta_{P_m\mid P_m'}(\lambda)
M_{P_m'\mid P}(\lambda).
\end{align*}
\item Take $\underline{\mu}=(\mu_1,\ldots,\mu_m)\in (\fa_M^*)^m$.
For any $\underline{\beta}=(\beta_1^\vee,\ldots,\beta_m^\vee)\in \fB_{P,\tL}$,
there exist $(Q_1,\ldots,Q_m)\in \Xi_{\tL}(\underline{\beta})$ and $\tilde{\mu}\in (\fa_{\tL}^{\tG})^*$ such that
\[
\tilde{\mu}-\mu_j\in \fa_{Q_j,+}^*
\]
for all $j=1,\ldots,m$. 
Note that $\tilde{\mu}$ is uniquely determined by $\langle \tilde{\mu},\beta_j^\vee\rangle=\langle \mu_j,\beta_j^\vee\rangle$, $j=1,\dots,m$.
\item We suppose that $\underline{\mu}$ is in general position (i.e. $\langle \mu_j,\alpha^\vee\rangle\neq 0$, $\forall \alpha\in \Sigma_M$). Then, $(Q_1,\dots,Q_m)$ is uniquely determined. 
Hence, we obtain a map $\cX_{\tL,\underline{\mu}}\colon \fB_{P,\tL}\to \cF_1(M)^m$
with $\cX_{\tL,\underline{\mu}}(\underline{\beta})\in \Xi_{\tL}(\underline{\beta})$ for all $\underline{\beta}\in \fB_{P,\tL}$. 
\end{itemize}

The following theorem is a twisted version of \cite{FLM11}*{Theorem 2}.
\begin{theorem}\label{thm:main}
Let $M\in\cL^G(M_0)$, $P\in\cP^G(M)$, and $\tL\in\cL^{\tG}(\tM_0)$ with $M\subset L$. 
Set $m\coloneqq \dim \fa_{\tL}^{\tG}$, and suppose that $\underline{\mu}\in(\fa_M^*)^m$ is in general position. 
Then, for $\tlambda\in\fa^*_{\tL,\C}$, we have
\begin{equation*}\label{eq:FLMmain-}
\cM_{\tL}(P,\tlambda)
=
\sum_{\underline{\beta}\in \fB_{P,\tL}}
\Delta_{\cX_{\tL,\underline{\mu}}(\underline{\beta})}(P,\tlambda).
\end{equation*}
\end{theorem}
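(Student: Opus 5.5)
The plan is to reduce Theorem~\ref{thm:main} to the general combinatorial result on $(G,M)$-families and induced fans that underlies \cite{FLM11}*{Theorem 2}, namely the formula of Finis--Lapid \cite{FL11}*{\S7} for the limit $\lim_{\Lambda\to0}\sum \epsilon_{\cC}(\Lambda)c_{\cC}(\Lambda)$ over the chambers of an induced fan $\Sigma(G,M,\mathfrak{U})^\sharp$. That formula expresses the limit as a sum over tuples of coroots whose projections to $\mathfrak{U}^\perp$ form a basis. Each term is a product of directional derivatives across walls of the original fan $\Sigma(G,M)$, with the walls selected by a generic $\underline{\mu}$. In \cite{FLM11} this is applied with $\mathfrak{U}=\fa_M^L$ via Lemma~\ref{lem:indfun1}. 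Here we apply it instead with $\mathfrak{U}=\fa_M^L\oplus\fa_L^{1-\theta}$. Proposition~\ref{prop:indfun} identifies the resulting induced fan with $\Sigma(\tG,\tL)$, and its chambers correspond bijectively to $\cP^{\tG}(\tL)$ by Lemma~\ref{lem:corrFSigma}.

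The steps run as follows. First I check that the $(\tG,\tL)$-family $\{\cM_Q(P,\tlambda,\tLambda)\}$ of Proposition~\ref{prop:GM} is exactly the restriction to $\mathfrak{U}^\perp=\fa_{\tL}^*$ of the $(G,M)$-family $\Lambda\mapsto\cM_{Q'}(P,\tlambda,\Lambda)$, $Q'\in\cP^G(M)$. The chamber $\fa^*_{\tQ,+}$ of the induced fan is cut out by a chamber $\fa^*_{Q',+}$ with $Q'\subset Q$, and by Lemma~\ref{lem:GM} the value is independent of the choice of $Q'$. I also check that $\epsilon_{\tQ}$ coincides with the normalized function attached to that chamber in the induced-fan formalism: $v_{\Delta_{\tQ}}^{-1}\prod\langle\tLambda,\talpha^\vee\rangle$, with measures taken from the $\theta$-invariant inner product. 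This identifies $\cM_{\tL}(P,\tlambda)$ with the left-hand side of the Finis--Lapid formula.

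Next I translate the right-hand side. In the Finis--Lapid formula the relevant tuples are $m$-tuples of coroots in $\Sigma_P^\vee$ whose projections to $\mathfrak{U}^\perp$ form a basis. Projecting $\beta^\vee$ orthogonally onto $(\fa_M^L\oplus\fa_L^{1-\theta})^\perp$ first gives $\beta^\vee_L$ and then $\tbeta^\vee_L$, and the basis is taken modulo $\fa_{\tG}$. So the index set is precisely $\fB_{P,\tL}$, with weight $\vol(\underline{\beta})$. The generic $\underline{\mu}$ selects for each $\beta_j$ a wall $Q_j=\overline{P_jP_j'}$ of $\Sigma(G,M)$ with $\tilde\mu-\mu_j\in\fa^*_{Q_j,+}$. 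This is the map $\cX_{\tL,\underline{\mu}}$ defined above the theorem. The corresponding term is the iterated product of jump derivatives of the $(G,M)$-family across these walls. Finally, exactly as in \cite{FLM11}*{\S2}, using $\cM_{Q}(P,\lambda,\Lambda)=M_{Q|P}(\lambda)^{-1}M_{Q|P}(\lambda+\Lambda)$ together with the composition rule and the fact that $M_{P_j|P_j'}$ depends only on $\langle\lambda,\beta_j^\vee\rangle$, the product telescopes into $\Delta_{\cX}(P,\tlambda)$.

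I expect the main obstacle to be matching normalizations in the twisted setting. Specifically, I need that the co-volumes $v_{\Delta_{\tQ}}$ and $\vol(\underline{\beta})$, computed in $\fa_{\tL}^{\tG}$ with the induced measure, agree with those in the abstract formula. I also need that the existence of $\tilde\mu$ and the genericity condition on $\underline{\mu}$ transfer correctly. For this I would rely on the orthogonality of the decomposition $\fa_M=\fa_{\tL}\oplus\fa_M^L\oplus\fa_L^{1-\theta}$, which holds because the inner product is $\theta$-invariant. With orthogonality, the projections to $\mathfrak{U}^\perp$ coincide with those used in defining $\tbeta^\vee_L$. If the abstract formula is stated only up to a fixed measure convention, I would fix the constant by comparing both sides on the trivial family $c\equiv1$ and on a linear family.
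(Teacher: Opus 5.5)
Your proposal is correct and follows the paper's proof: both apply the Finis--Lapid induced-fan theorem \cite{FL11}*{Theorem 8.2} to $\Sigma(G,M)$ with $U=\fa_M^L\oplus\fa_L^{1-\theta}$, and use Proposition~\ref{prop:indfun} to identify $\Sigma(G,M,U)^\sharp$ with $\Sigma(\tG,\tL)$. The paper makes the application precise by fixing $\lambda$ in general position, restricting to a finite-dimensional $(\fz,K)$-isotypic subspace of $\cA^2(P)$, and taking Taylor expansions at $\Lambda=0$ to obtain a compatible family in the sense of \cite{FL11}*{Definition 4.1}, so your fallback of calibrating constants on test families is not needed (and for $m\ge 1$ a constant family gives $0$ on both sides, so it could not detect a normalization anyway).
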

\begin{proof}
Let us first consider the case where $\tG=G$.
In this case, the theorem coincides with \cite{FLM11}*{Theorem 2}. 
As explained in \cite{FLM11}*{\S2.4}, it follows directly from the combinatorial results of \cite{FL11}.
Fix $\lambda\in\fa^*_{M,\C}$ in general position, and let $V$ be a finite-dimensional $(\fz,K)$-isotypic subspace of $\cA^2(P)$.
For each $Q\in\cP^G(M)$, let $\cA_Q$ denote the Taylor expansion at $\Lambda=0$ of the restriction of $\cM_Q(P,\lambda,\Lambda)$ to $V$.
Then $\cA=(\cA_Q)_{Q\in \cP^G(M)}$ is a compatible family in the sense of \cite{FL11}*{Definition 4.1}.
As a consequence, applying \cite{FL11}*{Theorem 8.2} to the compatible family $\cA$, the polyhedral fan $\Sigma(G,M)$, and the subspace $U=\fa_M^L$, one obtains \cite{FLM11}*{Theorem 2}.
Here we also use the fact that $\Sigma(G,M,U)^\sharp=\Sigma(G,L)$.

The twisted case is similar. 
We apply \cite{FL11}*{Theorem 8.2} to $\cA$, $\Sigma(G,M)$, and $U=\fa_M^L\oplus\fa_L^{1-\theta}$. 
By Proposition \ref{prop:indfun}, the induced fan $\Sigma(G,M,U)^\sharp$ coincides with $\Sigma(\tG,\tL)$, and hence the theorem follows.
\end{proof}

\begin{remark}\label{rem:Parab}
    At the beginning of \cite{Par19}*{Proof of Theorem 7.2}, Parab claims that
    \[
    \cM_L(P,\tlambda)=\cM_{\tL}(P,\tlambda) , \qquad  \tlambda\in \fa_{\tL,\C}^*.
    \]
    However, this is clearly incorrect, since $\cM_L(P,\lambda)$ and $\cM_{\tL}(P,\tlambda)$ involve different orders $m$ of differentiation.
    Therefore, the argument in \cite{Par19}*{Proof of Theorem 7.2} is not valid. 
    This issue is the only error in \cite{Par19}*{Proof of Theorem 7.2}. Replacing it with Theorem~\ref{thm:main} makes his argument for absolute convergence valid.
    In what follows, we prove the absolute convergence of the spectral side by following the argument of Parab \cite{Par19}*{\S7}.    
\end{remark}


\section{The operator \texorpdfstring{$\rho(P,\lambda,\tw)$}{rho}}\label{sec:rho}

We introduce the operator $\rho(P,\lambda,\tw)$.
\begin{itemize}
\item $W_0^G\coloneqq N_{G(F)}(M_0)/M_0$ is the Weyl group of $(G,T_0)$, where $N_{H'}(H)$ is the normalizer of $H$ in $H'$. For any $w\in W_0^G$, we choose a representative $n_w\in G(F)$.
\item $W_0^{\tG}\coloneqq N_{\tG(F)}(M_0)/M_0$ is the Weyl group of $(\tG,T_0)$.
Then $W^{\tG}_0=W^G_0\rtimes\theta$, and we choose $n_{\tw}=n_w\delta$ as a representative of $\tw=w\rtimes \theta$.
\item Fix $M\in\cL^G(M_0)$.
\begin{itemize}
\item $W^{\tG}(M)\coloneqq N_{\tG(F)}(M)/M$, which can be identified with a subset of $W_0^{\tG}$.
\item $W^{\tG}(M)$ acts on $\cP^G(M)$. More precisely, for $P\in \cP^G(M)$, the action is defined by $\tw  P=\mathrm{Ad}(n_{\tw})P=n_w\theta(P)n_w^{-1}$.
\end{itemize}
\item Let $P\in \cF^G(M)$ and $\tw\in W^{\tG}(M)$. Put $Q=\tw(P)$ and let $\lambda\in\fa_{P,\C}^*$.
Define the map $\rho(P,\lambda,\tw)\colon \bar{\cA}^2_P\to\bar{\cA}^2_Q$ by
\[
(\rho(P,\lambda,\tw,y)\phi)(x)\coloneqq \phi(n_{\tw}^{-1}xy) , e^{\langle \lambda,H_P(n_{\tw}^{-1}xy)\rangle -\langle \tw\lambda,H_Q(x)\rangle}, \qquad x\in G(\A),\quad  y\in\tG(\A).
\]
Furthermore, define the map $\sS(P,\lambda,\tw)\colon \bar{\cA}^2_P\to\bar{\cA}^2_Q$ by
\[
(\sS(P,\lambda,\tw)\phi)(x)\coloneqq \phi(n_{\tw}^{-1}xn_{\tw}) , e^{\langle \lambda,H_P(n_{\tw}^{-1}xn_{\tw})\rangle -\langle \tw\lambda,H_Q(x)\rangle}, \qquad x\in G(\A).
\]
\item For a function $f$ on $\tG(\A)$, define a function $L_{\tw}f$ on $G(\A)$ by $(L_{\tw}f)(x)\coloneqq f(n_{\tw}x)$, $x\in G(\A)$.
\end{itemize}

The following facts, which follow immediately from the definitions, were observed in \cite{Par19}*{Lemma 7.1}.
\begin{itemize}
\item $\rho(P,\lambda,\tw,n_{\tw}g)=\sS(P,\lambda,\tw)\circ \rho(P,\lambda,g)$, $g\in G(\A)$.
\item For a test function $f$ on $\tG(\A)$,
\begin{equation}\label{eq:composition}
\rho(P,\lambda,\tw,f)=\sS(P,\lambda,\tw)\circ \rho(P,\lambda,L_{\tw}f)
\end{equation}
holds.
\item $\sS(P,\lambda,\tw)$ is invertible and is unitary for $\lambda\in i\fa_{P}^*$.
\end{itemize}

\section{The Fr\'echet space \texorpdfstring{$\cC(\tG(\A)^1;K_0)$}{CGK}}\label{sec:Frechet}

We introduce the following notation.
\begin{itemize}
\item $G(\A)^1\coloneqq\cap_{\chi\in X^*(G)} \mathrm{ker}|\chi|$.
\item $\tG(\A)^1\coloneqq G(\A)^1\delta$.
\item $G(F_\inf)^1\coloneqq G(F_\inf)\cap G(\A)^1$.
\end{itemize}
Note that $G(\A)^1$ is naturally identified with $A_G\bs G(\A)$.

For an open compact subgroup $K_0$ of $K_f$, the quotient $\tG(\A)^1/K_0$ is a countable disjoint union of copies of $\tG(F_\inf)^1$, and hence is a smooth manifold.
From this point of view, we introduce $\cC(\tG(\A)^1;K_0)$ as follows.
\begin{itemize}
\item $C^\inf(\tG(\A)^1;K_0)$ is the space of smooth functions on $\tG(\A)^1/K_0$.
\item Any element $X\in\cU(\fg_\C)$ defines a left invariant differential operator $C^\inf(\tG(\A)^1;K_0)\ni f\mapsto f*X \in C^\inf(\tG(\A)^1;K_0)$.
\item $\cC(\tG(\A)^1;K_0)$ is the space of $f\in C^\inf(\tG(\mathbb A)^1;K_0)$ such that $|f*X|_{L^1(\widetilde G(\mathbb A)^1)}<\infty$ for all $X\in U(\mathfrak g)$ with the topology induced from the seminorms $|f*X|_{L^1(\tG(\A)^1)}$.
\end{itemize}
The space $\cC(\tG(\A)^1;K_0)$ is a Fr\'echet space.
For the definition of a Fr\'echet space, see, for example, \cite{SW99}.
For any $\tw\in W^{\tG}_0$, the map $L_{\tw}$ induces a seminorm-preserving bijection from $\cC(\tG(\A)^1;K_0)$ to $\cC(G(\A)^1;K_0)$.
This fact was observed in \cite{Par19}*{\S3.1}.

\section{Absolute convergence theorem}\label{sec:absoconv}

In what follows, we fix an open compact subgroup $K_0$ of $K_f$.
Let $M\in\cL^G(M_0)$ and $P\in\cP^G(M)$, and let $\tL\in\cL^{\tG}(\tM_0)$ satisfy $M\subset L$.
We also fix an element $\tw\in W^{\tG}(M)$.
We use the following notation for norms. For details, see, for example, \cite{RS80}.
\begin{itemize}
    \item $\| \cdot\|$ denotes the operator norm on $\bar{\cA}^2(P)$.
    \item $\| \cdot\|_1$ denotes the trace norm on $\bar{\cA}^2(P)$.
    \item An operator $A$ is said to be bounded if $\|A\|<+\inf$, and an operator $B$ is said to be of trace class if $\|B\|_1<\inf$.
    \item $\cU_P(\lambda,\tw)\coloneqq M_{P\mid \tw P}(0) \circ \sS(P,\lambda,\tw)$ is a unitary operator on $\bar{\cA}^2(P)$ for $\lambda\in i\fa_{\tL}^{\tG}$; in particular, $\| \cU_P(\lambda,\tw)\|=1$.
\end{itemize}

The following assertion is the twisted analogue of \cite{FLM11}*{Theorem 3}.
\begin{theorem}\label{thm:abs}
    Fix an open compact subgroup $K_0$ of $K_f$.
    Let $M\in\cL^G(M_0)$, $P\in\cP^G(M)$, and $\tL\in\cL^{\tG}(\tM_0)$ with $M\subset L$.
    Choose an element $\tw\in W^{\tG}(M)$.
    Then, for any $\underline{\beta}\in \fB_{P,\tL}$ and $\cX\in \Xi_{\tL}(\underline{\beta})$, 
    the seminorm 
    \[
    \int_{i\fa_{\tL}^{\tG}} \|\Delta_{\cX}(P,\tlambda) \ M_{P\mid \tw P}(0) \ \rho(P,\tlambda,\tw,f)  \|_1\, d\tlambda
    \]
    on $f\in \cC(\tG(\A)^1;K_0)$ is continuous.
\end{theorem}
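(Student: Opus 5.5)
Following Parab's argument, with the combinatorial formula replacing the incorrect identification, the plan is to reduce to the untwisted estimates of \cite{FLM11}. By \eqref{eq:composition},
\[
M_{P\mid \tw P}(0)\,\rho(P,\tlambda,\tw,f)=\cU_P(\tlambda,\tw)\circ M_{P\mid P}(\tlambda)\,\rho(P,\tlambda,L_{\tw}f)\cdot(\text{harmless rearrangement}),
\]
or more precisely $M_{P\mid\tw P}(0)\sS(P,\tlambda,\tw)\rho(P,\tlambda,L_{\tw}f)=\cU_P(\tlambda,\tw)\rho(P,\tlambda,L_{\tw}f)$. Since $\cU_P(\tlambda,\tw)$ is unitary for $\tlambda\in i\fa_{\tL}^{\tG}$, the trace norm of the integrand is at most $\|\Delta_{\cX}(P,\tlambda)\|\cdot\|\rho(P,\tlambda,L_{\tw}f)\|_1$. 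This is not quite sufficient, because $\Delta_{\cX}$ involves derivatives of intertwining operators, which are unbounded. We therefore keep $\Delta_\cX$ and $\rho$ together, but use unitarity to remove the twist.

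The key step is to write $\Delta_{\cX}(P,\tlambda)$ as a product of unitary operators $M_{\cdot\mid\cdot}(\tlambda)$ (since $\tlambda\in i\fa_M^*$) and the $m$ factors $\delta_{P_j\mid P_j'}(\tlambda)$. Each $\delta_{P_j\mid P_j'}(\lambda)$ depends only on $\langle\lambda,\beta_j^\vee\rangle$. Following \cite{FLM11}*{\S4}, we use a parametrix or Sobolev argument: choose an elliptic element $\Delta$ in $\cU(\fg_\C)$ (a power of $1-\Omega_{K_\inf}$ or similar) and insert $\rho(P,\tlambda,\Delta^{N})^{-1}\rho(P,\tlambda,\Delta^N)$. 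This allows the bound
\[
\|\Delta_{\cX}(P,\tlambda)\,\cU\,\rho(P,\tlambda,h)\|_1\le \prod_{j=1}^m\|\delta_{P_j\mid P_j'}(\tlambda)|_{\text{$K$-types}}\|\ldots
\]
to be reduced to the FLM estimate. Concretely, FLM show that for $h\in\cC(G(\A)^1;K_0)$ and each $\underline\beta$ and $\cX$ there is a continuous seminorm $\nu$ with
\[
\int_{i\fa_L^G}\|\Delta_\cX(P,\lambda)\rho(P,\lambda,h)\|_1\,d\lambda\le\nu(h).
\]
Their proof rests on the bound (their hypothesis (TWN+)/\cite{FLM11}*{Prop.~4.x})
\[
\int_{i\fa_M^*,\ |\lambda|\le T}\|\delta_{P_1\mid P_1'}(\lambda)|_{\cA^2_\pi(P)^{\tau}}\|\,d\langle\lambda,\beta^\vee\rangle\ll (1+T)^{k}\ldots,
\]
applied one root at a time, with polynomial growth in the $K_\inf$-type. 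Since the twisted integrand differs only by the unitary $\cU_P(\tlambda,\tw)$ on the right, adjacent to $\rho(P,\tlambda,L_\tw f)$, the same Sobolev manipulations apply verbatim with $h=L_{\tw}f$. Here $\cU_P$ commutes with nothing, but we do not need commutation: it sits next to $\rho$, and $\|\cU A\|_1=\|A\|_1$.

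The remaining issue is the domain of integration. We integrate over $i\fa_{\tL}^{\tG}$ rather than $i\fa_L^G$. The FLM estimates are iterated one-dimensional integrals along the directions $\langle\lambda,\beta_j^\vee\rangle$. Because $\tbeta^\vee_{1,L},\dots,\tbeta^\vee_{m,L}$ form a basis of $\fa_{\tL}^{\tG}$ (definition of $\fB_{P,\tL}$), the change of variables $\tlambda\mapsto(\langle\tlambda,\beta_j^\vee\rangle)_j$ is a linear isomorphism $i\fa_{\tL}^{\tG}\to i\R^m$ with Jacobian $\vol(\underline\beta)$. The FLM induction over the factors $\delta_{P_j\mid P_j'}$, each depending on one coordinate with the others frozen, then goes through unchanged. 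I expect this step to be the main obstacle: one must check that the uniform-in-$\lambda$ estimates of \cite{FLM11} are valid on the affine slices of $i\fa_M^*$ used here, whereas FLM only use slices of $i\fa_L^G$. Those estimates are in fact stated uniformly over $\lambda\in i\fa_M^*$, so the verification should be routine. Finally, continuity in $f$ follows because $L_{\tw}$ is seminorm-preserving from $\cC(\tG(\A)^1;K_0)$ to $\cC(G(\A)^1;K_0)$ (\S\ref{sec:Frechet}).
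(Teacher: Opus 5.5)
Your proposal follows essentially the paper's proof. You rewrite the integrand via \eqref{eq:composition} as $\Delta_{\cX}(P,\tlambda)\,\cU_P(\tlambda,\tw)\,\rho(P,\tlambda,L_{\tw}f)$, split off $\rho(P,\tlambda,(L_{\tw}f)*\Delta^{2k})$, and bound the remaining trace norm blockwise on $\cA^2_\pi(P)^{K_0,\tau}$ using \cite{FLM11}*{Proposition 1} in the coordinates $\langle\tlambda,\tbeta^\vee_{j,L}\rangle$. The ``slice'' issue you flag is therefore a non-issue, because each factor depends on a single coordinate. You then sum over $\pi,\tau$ by \cite{Mul89} and use that $L_{\tw}$ preserves seminorms.

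One correction: $\cU_P$ sits between $\Delta_{\cX}$ and the parametrix, so it is not removed by $\|\cU A\|_1=\|A\|_1$. It is removed by the estimate $\|A\|_1\le \dim W\,\|A\|$ on each finite-dimensional block, where only $\|\cU_P(\tlambda,\tw)\|=1$ is needed.
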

\begin{proof}
We follow the argument of \cite{FLM11}*{\S3}.
First, we introduce some notation.
\begin{itemize}
    \item $\Pi_\disc(M(\A))$ denotes the set of equivalence classes of irreducible unitary representations of $M(\A)$ which occur in the irreducible decomposition of $L^2_\disc(A_M\, M(F)\bs M(\A))$.
    \item $L^2_\disc(A_M M(F)\bs M(\A))$ decomposes as the Hilbert direct sum of the $\pi$-isotypic components for $\pi\in\Pi_\disc(M(\A))$. Corresponding to this decomposition, we obtain the Hilbert direct sum decomposition $\bar{\cA}^2(P)=\oplus_{\pi\in\Pi_{\disc}(M(\A))} \bar{\cA}_\pi^2(P)$.
    \item If $\cA^2_\pi(P)$ denotes the $K$-finite part of $\bar{\cA}_\pi^2(P)$, then we obtain the algebraic direct sum decomposition $\cA^2(P)=\oplus_{\pi\in\Pi_{\disc}(M(\A))} \bar{\cA}_\pi^2(P)$.
    \item Let $\widehat{K_\inf}$ be the unitary dual of $K_\inf$.
    \item Let $\cA^2_\pi(P)^\tau$ denote the $\tau$-isotypic component in $\cA^2_\pi(P)$ for $\tau\in\widehat{K_\inf}$. Thus, $\cA^2_\pi(P)=\oplus_{\tau\in\widehat{K_\inf}}\cA^2_\pi(P)^\tau$ as a Hilbert direct sum.
    \item Let $\cA^2_\pi(P)^{K_0,\tau}$ denote the subspace of $K_0$-invariant vectors in $\cA^2_\pi(P)^{\tau}$.
    \item Let $\Omega$ (resp. $\Omega_{K_\inf}$) be the Casimir operator of $G(F_\inf)$ (resp. $K_\inf$), and set
    \[
    \Delta=\mathrm{Id}-\Omega+2\Omega_{K_\inf}.
    \]
    \item $\rho(P,\lambda,\Delta)$ acts on $\cA^2_\pi(P)^{K_0,\tau}$ by the scalar $\mu(\pi,\lambda,\tau)\coloneqq 1+\|\lambda\|^2-\lambda_\pi+2\lambda_\tau$. Here, we denote by $\lambda_\pi$ the Casimir eigenvalue of the $G(F_\inf)$-component of $\pi\in\Pi_\disc(M(\A))$, and by $\lambda_\tau$ the Casimir eigenvalue of $\tau\in \widehat{K_\inf}$.
    By \cite{Mul02}*{(6.9)}, one has
    \begin{equation}\label{eq:101}
        |\mu(\pi,\lambda,\tau)|^2\ge \frac{1}{4}(1+\|\lambda\|^2+\lambda_\pi^2+\lambda_\tau^2).
    \end{equation}
\end{itemize}
   By \eqref{eq:composition},
    \[
    \Delta_{\cX}(P,\tlambda) \ M_{P\mid \tw P}(0) \ \rho(P,\lambda,\tw,f) = \Delta_{\cX}(P,\tlambda) \ \cU_P(\lambda,\tw) \ \rho(P,\lambda,L_{\tw} f) .
    \]
    Choose a sufficiently large $k\in\N$.
    Recall that, for operators $A$ and $B$, one has $\|AB\|_1\le \|A\|\|B\|_1$.
    Since $\Delta_{\cX}(P,\tlambda) \ \cU_P(\tlambda,\tw) \ \rho(P,\tlambda,\Delta^{2k})^{-1}$ is an operator on $A^2(P)$, we obtain
    \begin{align*}
        & \|\Delta_{\cX}(P,\tlambda) \ \cU_P(\tlambda,\tw) \ \rho(P,\tlambda,L_{\tw} f)\|_1 \\
        &\le \|\Delta_{\cX}(P,\tlambda) \ \cU_P(\tlambda,\tw) \ \rho(P,\tlambda,\Delta^{2k})^{-1}\|_1 \ \| \rho(P,\tlambda,(L_{\tw} f)*\Delta^{2k})\| \\
        &\le  \|\Delta_{\cX}(P,\tlambda) \ \cU_P(\tlambda,\tw) \ \rho(P,\tlambda,\Delta^{2k})^{-1}\|_1 \ \|  f*\Delta^{2k}\|_{L^1(\tG(\A))}.
    \end{align*}
    Therefore, in order to prove the theorem, it suffices to show that, for sufficiently large $k$,
    \begin{equation}\label{eq:102}
        \int_{i\fa_{\tL}^{\tG}} \|\Delta_{\cX}(P,\tlambda) \ \cU_P(\tlambda,\tw) \ \rho(P,\tlambda,\Delta^{2k})^{-1}\|_1 \, d\lambda        
    \end{equation}
    converges.

    Let $m=\dim\fa_{\tL}^{\tG}$, $\cX=(Q_1,\dots,Q_m)$, $Q_j=\overline{P_jP_j'}$, and $P_j\mid^{\beta_j}P_j'$.
    Recall that, for a linear operator $A$ on a finite-dimensional vector space $W$, one has $\|A\|_1\le \|A\| \ \dim W$.
    Using this inequality and the fact that $M_{Q\mid P}(\tlambda)$ and $\cU_P(\tlambda,\tw)$ are unitary, we obtain
    \[
    \eqref{eq:102}\le \sum_{\tau\in\widehat
    {K_\inf}}\sum_{\pi\in\Pi_{\disc}(M(\A))}\dim(\cA^2_\pi(P)^{K_0,\tau})\ \int_{i\fa_{\tL}^{\tG}} |\mu(\pi,\tlambda,\tau)|^{-2k} \ \prod_{j=1}^m \| \delta_{P_j\mid P_j'}(\tlambda)|_{\cA^2_\pi(P)^{K_0,\tau}}\|\ d\tlambda.
    \]
    Since $\delta_{P_j\mid P_j'}(\tlambda)$ depends only on $\langle\tlambda,\beta_j^\vee\rangle=\langle\tlambda,\tbeta_{L,j}^\vee\rangle$, if $\varpi_1,\dots,\varpi_m$ denotes the dual basis of $\tbeta_{L,1}^\vee,\dots,\tbeta_{L,m}^\vee$ in $(\fa_{\tL}^{\tG})^*$, then the coordinates of $\tlambda$ with respect to $\varpi_1,\dots,\varpi_m$ are given by $\langle\tlambda,\tbeta_{L,j}^\vee\rangle$.
    Moreover, by \cite{FLM11}*{Proposition 1}, there exist constants $C_j$, $N_j$, $N'_j>0$ such that
    \[
    \int_{i \R} \| \delta_{P_j\mid P_j'}(s\varpi_j)|_{\cA^2_\pi(P_j')^{K_0,\tau}}  \| \ (1+|s|)^{-N_j} ds
    \le C_j \ (1+\lambda_\pi^2+\lambda_\tau^2)^{N_j'}.
    \]
    Therefore, applying \cite{FLM11}*{Proposition 1} to the right-hand side of the above inequality, together with the coordinates with respect to $\varpi_1,\dots,\varpi_m$ and \eqref{eq:101}, we obtain, for sufficiently large $k$,
    \[
    \eqref{eq:102}\le \sum_{\tau\in\widehat
    {K_\inf}}\sum_{\pi\in\Pi_{\disc}(M(\A))}\dim(\cA^2_\pi(P)^{K_0,\tau})\ (1+\lambda_\pi^2+\lambda_\tau^2)^{-k}.
    \]
    The absolute convergence of the right-hand side of this inequality follows from \cite{Mul89}*{Corollary 0.3}.
\end{proof}

\section{Modified kernel}\label{sec:kernel}

Before describing the refinement of the spectral side in the next section, we briefly recall the modified kernel in the twisted setting.
In \cite{LW13} and \cite{Par19}, twisted representations with respect to a fixed unitary character of $A_G\, G(F)\bs G(\A)$ are considered.
For the sake of simplicity of exposition, however, we assume throughout this paper that the unitary character is trivial, and derive the absolute convergence of the spectral side from Theorem~\ref{thm:main}.

Let $V$ be a vector space.
A twisted representation $\tpi$ of $G(\A)$ and $V$ with respect to $\tG(\A)$ is a pair consisting of a representation $\pi\colon G(\A)\to \GL(V)$ of $G(\A)$ and an invertible operator $\tpi(\delta)\in \GL(V)$ satisfying
$\tpi(x\delta y)=\pi(x)\, \tpi(\delta)\, \pi(y)$ $(x,y\in G(\A))$.
In particular, we have $\tpi(\delta)\pi(x)=\pi\circ\theta(x)\, \tpi(\delta)$.
For the group $G^+$ generated by $\tG$, a twisted representation $\tpi$ is naturally identified with a representation of $G^+(\A)$.

For $\tP\in \cF^{\tG}$ and $P\supset P_0$, the twisted representation $\widetilde{R}_P$ of the right regular representation $R_P$ on
$L^2(M_P(F)N_P(\A)\bs G(\A))$ is defined by
\[
(\widetilde{R}_P(\delta)\phi)(x)\coloneqq \phi(\theta^{-1}( x )),\qquad \phi\in L^2(A_{M_P} M_P(F)N_P(\A)\bs G(\A))
\]
For $f\in C_c^\inf(\tG(\A)^1)$, the integral kernel $K_{\tP}(x,y)$ of the operator $\widetilde{R}_P(f)$ is given by
\[
K_{\tP}(x,y)\coloneqq \int_{N_P(\A)} \sum_{\gamma\in \tM_P(F)}f(x^{-1}\gamma ny)\ dn ,\quad x,\ y\in G(\A)^1
\]
Let $\hat\Delta_{\tP}$ be the basis of $(\fa_{\tP}^{\tG})^*$ dual to $\Delta_{\tP}^\vee$, and let $\hat\tau_{\tP}$ be the characteristic function of $\{H\in\fa_{P_0}^{G}\mid \widetilde{\varpi}(H)>0$ for all $ \widetilde{\varpi}\in \hat\Delta_{\tP} \}$.
Then, for a test function $f\in C^\inf_c(\tG(\A)^1)$ and a truncation parameter $T\in\fa_{P_0}^{G}$,
\[
J^{T,\tG}(f)\coloneqq \int_{G(F)\bs G(\A)^1} k^T(x) \ dx ,\qquad f\in C_c^\inf(\tG(\A)^1),
\]
\[
k^T(x)\coloneqq \sum_{\substack{ \tP\in\cF^{\tG}(\tM_0), \\ P\supset P_0}} (-1)^{\dim\fa_{\tP}^{\tG}} \sum_{\xi\in P(F)\bs G(F)} K_{\tP}(x,x)\ \hat\tau_{\tP}(H_{P_0}(\xi x)-T)
\]
define the integral $J^{T,\tG}(f)$ of the modified kernel $k^T(x)$.
By \cite{LW13}*{Th\'eor\`eme 9.1.2}, the integral $J^{T,\tG}(f)$ converges absolutely in the region where $\min_{\alpha\in\Delta_{P_0}}\alpha(T)$ is sufficiently large.
Moreover, as proved in \cite{LW13}*{p.152}, the integral $J^{T,\tG}(f)$ is regarded as a polynomial in $T$.
Let $T_0$ be the point of $\fa_{P_0}^{G}$ defined in \cite{LW13}*{Lemme 3.3.3}.
We denote by $J^{\tG}(f)$ the value of $J^{T,\tG}(f)$ at $T=T_0$.
The distribution $J^{\tG}(f)$ is independent of the choice of $\tP_0\in\cP^{\tG}(\tM_0)$.
Although the notation $T_0$ for this point conflicts with the split torus $T_0$ fixed in \S\ref{sec:setup} of this paper, we follow the standard convention and use the same notation $T_0$.

\section{A refinement of the spectral side}\label{sec:refinement}

Finally, we describe the refinement of the spectral side of $J^{\tG}(f)$ given in \cite{LW13}*{\S14.3}.
We then present a formulation combined with the combinatorial formula, Theorem~\ref{thm:main}.

We fix the following notation.
\begin{itemize}
    \item $\tW^L_0\coloneqq N_{L(F)}(T_{\tM_0})/M_0$. 
    \item $\theta_L$ denotes the restriction of the action of $\theta$ to $\fa_L$.
    \item $W^{\tG}(M)_\reg\coloneqq\{ \tw\in W^{\tG}(M) \mid \det( (\tw-1)|_{\fa_M^G})\neq 0  \}$.
    \item For $\tL\in\cL^{\tG}(\tM_0)$, $M\in\cL^L(M_0)$, and $\tw\in W_0^{\tL}(M)_\reg$, set
    \[
    \iota_{\tL,M,\tw}\coloneqq \frac{|\tW^L_0|}{|\tW^G_0|}\frac{1}{|\det((\theta_L-1)|_{\fa_L^G/\fa_{\tL}^{\tG}})|} \   \frac{|W^M_0|}{|W^L_0|}\frac{1}{|\det((\tw-1)|_{\fa_M^L})|}.
    \]
\end{itemize}
The theorem \cite{LW13}*{Th\'eor\`eme 14.3.1} was proved by using the formula for each cuspidal datum, \cite{LW13}*{Th\'eor\`eme 14.2.1}, together with the absolute convergence theorem, Theorem~\ref{thm:abs}.
As a result, in \cite{LW13}*{Th\'eor\`eme 14.3.3}, as a refinement of the spectral side of $J^{\tG}(f)$, $f\in C_c^\inf(\tG(\A)^1)$, the distribution $J^{\tG} (f)$ is expanded as
\[
\sum_{\tL\in\cL^{\tG}(\tM_0)} \sum_{M\in\cL^L(M_0)} \sum_{\tw\in W^{\tL}(M)_\reg}\iota_{\tL,M,\tw} \ \int_{i(\fa_{\tL}^{\tG})^*} \tr\left( \cM_{\tL}(P,\tlambda) \ M_{P\mid\tw P}(0)\ \rho(P,\tlambda,\tw,f) \right) \ d\tlambda.
\]
Here, for each $M$, we choose an arbitrary element $P\in\cP^G(M)$.

Let $\cC(\tG(\A)^1)$ denote the inductive limit of $\cC(\tG(\A)^1;K_0)$ over all open compact subgroups $K_0$ of $K_f$.
Combining Theorems~\ref{thm:main} and \ref{thm:abs} with the above refinement of the spectral side of $J^{\tG}(f)$, we obtain the following assertion.
\begin{theorem}\label{thm:conti}
The integral $J^{\tG}(f)$ can be expressed as
\begin{align*}
 J^{\tG}(f)=&\sum_{\tL\in\cL^{\tG}(\tM_0)} \sum_{M\in\cL^L(M_0)} \sum_{\tw\in W^{\tL}(M)_\reg}\iota_{\tL,M,\tw} \\ 
& \sum_{\underline{\beta}\in \fB_{P,\tL}}
\int_{i(\fa_{\tL}^{\tG})^*} \tr\left( \Delta_{\cX_{\tL,\underline{\mu}}(\underline{\beta})}(P,\tlambda) \ M_{P\mid\tw P}(0)\ \rho(P,\tlambda,\tw,f) \right) \ d\tlambda .
\end{align*}
In this expression, the sums on the right-hand side are finite, and for every $f\in\cC(\tG(\A)^1)$ each integral on the right-hand side converges absolutely with respect to the trace norm.
Hence, each integral on the right-hand side, and hence $J^{\tG}(f)$ itself, defines a distribution on $\cC(\tG(\A)^1)$.
\end{theorem}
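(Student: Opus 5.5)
The proof will combine three ingredients already in hand: the refined spectral expansion of Labesse--Waldspurger \cite{LW13}*{Th\'eor\`eme 14.3.3} quoted above, the combinatorial formula of Theorem~\ref{thm:main}, and the seminorm estimate of Theorem~\ref{thm:abs}.

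\textbf{Step 1: compactly supported test functions.} Start with $f\in C_c^\inf(\tG(\A)^1)$ and the expansion of $J^{\tG}(f)$ as a sum over $\tL$, $M$ and $\tw\in W^{\tL}(M)_\reg$ of $\iota_{\tL,M,\tw}$ times
\[
\int_{i(\fa_{\tL}^{\tG})^*}\tr\bigl(\cM_{\tL}(P,\tlambda)\,M_{P\mid\tw P}(0)\,\rho(P,\tlambda,\tw,f)\bigr)\,d\tlambda .
\]
For each $M$, fix $P$ and a general-position $\underline{\mu}$. Theorem~\ref{thm:main} gives
\[
\cM_{\tL}(P,\tlambda)=\sum_{\underline{\beta}\in\fB_{P,\tL}}\Delta_{\cX_{\tL,\underline{\mu}}(\underline{\beta})}(P,\tlambda).
\]
Substitute this into the trace. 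The outer sums are finite, because $\cL^{\tG}(\tM_0)$, $\cL^L(M_0)$ and $W^{\tL}(M)$ are finite, and $\fB_{P,\tL}$ consists of $m$-tuples from the finite set $\Sigma_P^\vee$. Trace is linear, so the integrand splits into a finite sum. To split the integral as well, each summand must be integrable on its own. This follows from Theorem~\ref{thm:abs}: for $f$ bi-invariant under some $K_0$, each $\tr(\Delta_{\cX}(P,\tlambda)M_{P\mid\tw P}(0)\rho(P,\tlambda,\tw,f))$ is bounded in absolute value by the trace norm, and the trace norm is integrable. Here $C_c^\inf(\tG(\A)^1)$ is contained in $\cC(\tG(\A)^1;K_0)$ for suitable $K_0$. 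This proves the identity for compactly supported $f$.

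\textbf{Step 2: extension to $\cC(\tG(\A)^1)$.} By Theorem~\ref{thm:abs}, each map
\[
f\mapsto\int\tr\bigl(\Delta_{\cX}(P,\tlambda)M_{P\mid\tw P}(0)\rho(P,\tlambda,\tw,f)\bigr)\,d\tlambda
\]
is bounded by a continuous seminorm on each $\cC(\tG(\A)^1;K_0)$. It is therefore a continuous linear functional there, and it is compatible with the inclusions, so it defines a continuous functional on the inductive limit $\cC(\tG(\A)^1)$. Consequently the right-hand side is a finite sum of continuous functionals, and it is absolutely convergent in trace norm for every $f\in\cC(\tG(\A)^1)$. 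We then take this right-hand side as the definition of $J^{\tG}(f)$ for general $f$. This is consistent with Step 1, and it is the unique continuous extension provided $C_c^\inf(\tG(\A)^1;K_0)$ is dense in $\cC(\tG(\A)^1;K_0)$.

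\textbf{Main obstacle.} Two points need care.

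The first is justifying the expansion of \cite{LW13} with the conventions used here, namely the trivial central character and our normalization of measures. The factor $\iota_{\tL,M,\tw}$ and the operators $\rho(P,\tlambda,\tw,f)$ have to match those of \cite{LW13}.

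The second is the density of $C_c^\inf$ in $\cC(\tG(\A)^1;K_0)$. This should follow from the untwisted case via the seminorm-preserving bijection $L_{\tw}$, together with a standard cutoff argument on $\tG(\A)^1/K_0$, which is a countable union of copies of $\tG(F_\inf)^1$. I expect the remaining work to be routine bookkeeping.
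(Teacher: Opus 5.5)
Your proposal is correct and follows the paper's argument: the paper's proof is the single remark that one substitutes Theorem~\ref{thm:main} into the Labesse--Waldspurger expansion and uses Theorem~\ref{thm:abs} both to split the integral term by term and to get continuity on each $\cC(\tG(\A)^1;K_0)$, hence on the inductive limit. The points you flag (matching conventions with \cite{LW13}, and density of compactly supported functions for uniqueness of the extension) are treated as standard in the paper and are not spelled out there.
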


\bibliography{bibliography}

\begin{bibdiv}
\begin{biblist}

\bib{Art05}{incollection}{
      author={Arthur, James},
       title={An introduction to the trace formula},
        date={2005},
   booktitle={Harmonic analysis, the trace formula, and {S}himura varieties},
      series={Clay Math. Proc.},
      volume={4},
   publisher={Amer. Math. Soc., Providence, RI},
       pages={1\ndash 263},
      review={\MR{2192011}},
}

\bib{Art78}{article}{
      author={Arthur, James~G.},
       title={A trace formula for reductive groups. {I}. {T}erms associated to classes in {$G({\bf Q})$}},
        date={1978},
        ISSN={0012-7094,1547-7398},
     journal={Duke Math. J.},
      volume={45},
      number={4},
       pages={911\ndash 952},
         url={http://projecteuclid.org/euclid.dmj/1077313104},
      review={\MR{518111}},
}

\bib{Art81}{article}{
      author={Arthur, James},
       title={The trace formula in invariant form},
        date={1981},
        ISSN={0003-486X},
     journal={Ann. of Math. (2)},
      volume={114},
      number={1},
       pages={1\ndash 74},
         url={https://doi.org/10.2307/1971376},
      review={\MR{625344}},
}

\bib{Art82}{article}{
      author={Arthur, James},
       title={On a family of distributions obtained from {E}isenstein series. {II}. {E}xplicit formulas},
        date={1982},
        ISSN={0002-9327,1080-6377},
     journal={Amer. J. Math.},
      volume={104},
      number={6},
       pages={1289\ndash 1336},
         url={https://doi.org/10.2307/2374062},
      review={\MR{681738}},
}

\bib{Art88b}{article}{
      author={Arthur, James},
       title={The invariant trace formula. {II}. {G}lobal theory},
        date={1988},
        ISSN={0894-0347,1088-6834},
     journal={J. Amer. Math. Soc.},
      volume={1},
      number={3},
       pages={501\ndash 554},
         url={https://doi.org/10.2307/1990948},
      review={\MR{939691}},
}

\bib{Bor91}{book}{
      author={Borel, Armand},
       title={Linear algebraic groups},
     edition={Second},
      series={Graduate Texts in Mathematics},
   publisher={Springer-Verlag, New York},
        date={1991},
      volume={126},
        ISBN={0-387-97370-2},
         url={https://doi.org/10.1007/978-1-4612-0941-6},
      review={\MR{1102012}},
}

\bib{FL11}{article}{
      author={Finis, Tobias},
      author={Lapid, Erez},
       title={On the spectral side of {A}rthur's trace formula---combinatorial setup},
        date={2011},
        ISSN={0003-486X,1939-8980},
     journal={Ann. of Math. (2)},
      volume={174},
      number={1},
       pages={197\ndash 223},
         url={https://doi.org/10.4007/annals.2011.174.1.6},
      review={\MR{2811598}},
}

\bib{FLM11}{article}{
      author={Finis, Tobias},
      author={Lapid, Erez},
      author={M\"uller, Werner},
       title={On the spectral side of {A}rthur's trace formula---absolute convergence},
        date={2011},
        ISSN={0003-486X,1939-8980},
     journal={Ann. of Math. (2)},
      volume={174},
      number={1},
       pages={173\ndash 195},
         url={https://doi.org/10.4007/annals.2011.174.1.5},
      review={\MR{2811597}},
}

\bib{FLM15}{article}{
      author={Finis, Tobias},
      author={Lapid, Erez},
      author={M\"{u}ller, Werner},
       title={Limit multiplicities for principal congruence subgroups of {${\rm GL}(n)$} and {${\rm SL}(n)$}},
        date={2015},
        ISSN={1474-7480,1475-3030},
     journal={J. Inst. Math. Jussieu},
      volume={14},
      number={3},
       pages={589\ndash 638},
         url={https://doi.org/10.1017/S1474748014000103},
      review={\MR{3352530}},
}

\bib{FM21}{article}{
      author={Finis, Tobias},
      author={Matz, Jasmin},
       title={On the asymptotics of {H}ecke operators for reductive groups},
        date={2021},
        ISSN={0025-5831,1432-1807},
     journal={Math. Ann.},
      volume={380},
      number={3-4},
       pages={1037\ndash 1104},
         url={https://doi.org/10.1007/s00208-021-02163-0},
      review={\MR{4297181}},
}

\bib{LW13}{book}{
      author={Labesse, Jean-Pierre},
      author={Waldspurger, Jean-Loup},
       title={La formule des traces tordue d'apr\`es le {F}riday {M}orning {S}eminar},
      series={CRM Monograph Series},
   publisher={American Mathematical Society, Providence, RI},
        date={2013},
      volume={31},
        ISBN={978-0-8218-9441-5},
         url={https://doi.org/10.1090/crmm/031},
        note={With a foreword by Robert Langlands [dual English/French text]},
      review={\MR{3026269}},
}

\bib{Mul02}{article}{
      author={M\"uller, W.},
       title={On the spectral side of the {A}rthur trace formula},
        date={2002},
        ISSN={1016-443X,1420-8970},
     journal={Geom. Funct. Anal.},
      volume={12},
      number={4},
       pages={669\ndash 722},
         url={https://doi.org/10.1007/s00039-002-8263-7},
      review={\MR{1935546}},
}

\bib{Mul89}{article}{
      author={M\"{u}ller, Werner},
       title={The trace class conjecture in the theory of automorphic forms},
        date={1989},
        ISSN={0003-486X,1939-8980},
     journal={Ann. of Math. (2)},
      volume={130},
      number={3},
       pages={473\ndash 529},
         url={https://doi.org/10.2307/1971453},
      review={\MR{1025165}},
}

\bib{Par19}{article}{
      author={Parab, Abhishek},
       title={Absolute convergence of the twisted trace formula},
        date={2019},
        ISSN={0025-5874,1432-1823},
     journal={Math. Z.},
      volume={292},
      number={1-2},
       pages={529\ndash 567},
         url={https://doi.org/10.1007/s00209-019-02290-0},
      review={\MR{3968914}},
}

\bib{RS80}{book}{
      author={Reed, Michael},
      author={Simon, Barry},
       title={Methods of modern mathematical physics. {I}},
     edition={Second},
   publisher={Academic Press, Inc. [Harcourt Brace Jovanovich, Publishers], New York},
        date={1980},
        ISBN={0-12-585050-6},
        note={Functional analysis},
      review={\MR{751959}},
}

\bib{Spr98}{book}{
      author={Springer, T.~A.},
       title={Linear algebraic groups},
     edition={Second},
      series={Progress in Mathematics},
   publisher={Birkh\"auser Boston, Inc., Boston, MA},
        date={1998},
      volume={9},
        ISBN={0-8176-4021-5},
         url={https://doi.org/10.1007/978-0-8176-4840-4},
      review={\MR{1642713}},
}

\bib{SW99}{book}{
      author={Schaefer, H.~H.},
      author={Wolff, M.~P.},
       title={Topological vector spaces},
     edition={Second},
      series={Graduate Texts in Mathematics},
   publisher={Springer-Verlag, New York},
        date={1999},
      volume={3},
        ISBN={0-387-98726-6},
         url={https://doi.org/10.1007/978-1-4612-1468-7},
      review={\MR{1741419}},
}

\bib{TW26}{article}{
      author={Takanashi, Yugo},
      author={Wakatsuki, Satoshi},
       title={Asymptotic behavior for twisted traces of self-dual and conjugate self-dual representations of {${\rm GL}_n$}},
        date={2026},
        ISSN={0001-8708,1090-2082},
     journal={Adv. Math.},
      volume={493},
       pages={Paper No. 110924, 99},
         url={https://doi.org/10.1016/j.aim.2026.110924},
      review={\MR{5050433}},
}

\bib{Zie95}{book}{
      author={Ziegler, G\"unter~M.},
       title={Lectures on polytopes},
      series={Graduate Texts in Mathematics},
   publisher={Springer-Verlag, New York},
        date={1995},
      volume={152},
        ISBN={0-387-94365-X},
         url={https://doi.org/10.1007/978-1-4613-8431-1},
      review={\MR{1311028}},
}

\end{biblist}
\end{bibdiv}
\bibliographystyle{amsplain}

\end{document}